\newtheorem{theorem}{Theorem}[section]
\newtheorem{corollary}[theorem]{Corollary}
\theoremstyle{definition}
\newtheorem{remark}[theorem]{Remark}
\newtheorem{conjecture}[theorem]{Conjecture}
\numberwithin{equation}{section}
\newcommand\m[1]{\mathbb{#1}}
\newcommand\mc[1]{\mathcal{#1}}
\newcommand\inn[1]{\langle #1 \rangle}
\begin{document}

\baselineskip=17pt

\title[Invertible elements in Fourier-Stieltjes algebras]{On the structure of invertible elements in certain Fourier-Stieltjes algebras}

\author[A. Thamizhazhagan]{Aasaimani Thamizhazhagan}
\address{Department of Pure Mathematics\\ University of Waterloo\\
Ontario\\Canada N2L 3G1}
\email{athamizh@uwaterloo.ca}

\date{}

\begin{abstract}
For a locally compact abelian group $G$, J. L. Taylor (1971) gave a complete characterization of invertible elements in the measure algebra $M(G)$. Using the Fourier-Stieltjes transform, this characterization can be carried out in the context of Fourier-Stieltjes algebras $B(G)$. We obtain this latter characterization for the Fourier-Stieltjes algebra $B(G)$ of certain classes of locally compact groups, in particular, many totally minimal groups and the $ax+b$-group. 
\end{abstract}

\subjclass[2010]{Primary 43A30,46J40; Secondary 43A70,46M20}

\keywords{Fourier-Stieltjes algebra, invertible element, totally minimal groups}

\maketitle

\section{Introduction}\label{intro}
For a non-discrete locally compact abelian group $\Gamma$ with dual group $G$, Taylor (\cite[Theorem 3]{Tay1}) proved a factorization theorem for invertible measures in its measure algebra $M(\Gamma)$: for each $\mu \in M(\Gamma)^{-1}$, there are l.c.a.\ groups $\Gamma_{\tau_1}, \ldots,\Gamma_{\tau_n}$ continuously isomorphic to $\Gamma$ and measures $\nu_i \in (L^1(\Gamma_{\tau_i}) \oplus \m C 1)^{-1}, \ \omega \in M(\Gamma),$ such that $$ \mu = \nu_1 * \cdots * \nu_n * \exp(\omega). $$ The measures $\nu_i$ are unique modulo $\exp(L^1(\Gamma_{\tau_i}) \oplus \m C 1)$. By calling $\Gamma_\tau$ continuously isomorphic to $\Gamma$, we mean that $\Gamma_\tau$ is equal to $\Gamma$ as a group, but it is an l.c.a.\ group under a topology $\tau$ possibly finer than that of  $\Gamma$. 

The non-commutative analogues of $M(\Gamma)$ and $L^1(\Gamma)$ for general locally compact groups $G$ are the Fourier-Stieltjes algebra $B(G)$ and the Fourier algebra $A(G)$, introduced by Eymard (\cite{Eym}). When $G$ is abelian with dual group $\Gamma$, the Fourier-Stieltjes transformation maps $M(\Gamma)$ isometrically onto $B(G)$ and $L^1(\Gamma)$ onto $A(G)$. Via the Fourier-Stieltjes transform for abelian $G$, we see that every invertible element $u \in B(G)$ is of the form $u=v_1\cdots v_n\cdot \exp(w)$ where $v_i \in (A(G_{\sigma_i}) \oplus \m C 1)^{-1}, w \in B(G)$ and the topologies $\sigma_i$ are coarser than the ambient group topology.

 For a non-discrete locally compact abelian group $\Gamma$, several factors such as the Wiener-Pitt Phenomenon, the assymmetricity of $M(\Gamma)$ and the existence of independent Cantor sets indicate that the spectrum of $M(\Gamma)$ is quite complicated, as is the problem of deciding when $\mu \in M(\Gamma)$ is invertible (i.e. when $\widehat{\mu}$ does not vanish on $\Delta(M(\Gamma))$). In \cite{Tay1}, with varying degrees of technical difficulty, Taylor reduces the problem of invertibility in $M(\Gamma)$ to a symmetric subalgebra $\mathcal{L}(\Gamma)$, the closed linear span of all maximal subalgebras which are isometrically isomorphic to some locally compact abelian group algebra. He calls this the $spine$ of $M(\Gamma)$. The key to Taylor's approach is to notice that $\Delta(M(\Gamma))$ has a great deal of structure not generally enjoyed by maximal ideal spaces. It has a semigroup structure, an order structure, and has certain subsets on which two important topologies, the weak and the strong topology, coincide. Taylor introduced the notion of critical points, those positive elements in $\Delta(M(\Gamma))$ which cannot be weakly approximated by strictly smaller positive elements of $\Delta(M(\Gamma))$. He establishes a one-to-one correspondence between the maximal subalgebras and critical points. These critical points have to be idempotents and the $G_{\sigma_i}$'s above are exactly the maximal groups in $\Delta(M(\Gamma))$ at these critical points, thereby locally compact abelian groups (\cite[Proposition 2.3. (4)]{Tay1}). 

 In the hope of extending Taylor's theory to the context of Fourier-Stieltjes algebras, two different approaches have been attempted so far: spectral analysis and topological analysis. In \cite{Wal2}, Walter showed that the maximal ideal space of $B(G)$ for a general locally compact group $G$ exhibits a structure like that of an abelian measure algebras and extended the notion of critical points to $\Delta(B(G))$. Then a reasonable conjecture would be that every invertible element $u$ in $B(G)$ is of the form $u = v_1\cdots v_n\cdot \exp(w)$ where $w \in B(G), \ v_i \in (A(G_z) \oplus \m C 1)^{-1}$ for some central critical element $z$ in $\Delta(B(G))$. Here, $G_z$ is a locally compact group about the critical point $z$. On the other hand, in \cite{Nicmon}, Ilie and Spronk develop the non-commutative dual analogue of the spine of an abelian measure algebra and (\cite{Nicmon}, Theorem 5.1) provides an explicit description of the above topologies $\sigma_i$. Moreover, they note that it would be interesting to determine if invertible elements $u$ in $B(G)$ are of the form $u = v_1\cdots v_n\cdot \exp(w)$ where $w \in B(G), \ v_i \in (A(G_{\rho_i}) \oplus \m C 1)^{-1}$ for some locally precompact topology $\rho_i$ on $G$. By $G_\rho$, we mean the locally compact completion of $G$ with respect to $\rho$.
 
We prove that the above conjectures hold for Fourier-Stieltjes algebra that satisfy some very restrictive conditions. These conditions are satisfied for interesting examples within Lie groups, in particular, many connected totally minimal groups. Moreover, elementary arguments based on the Arens-Royden theorem immediately allow for a deeper analysis of invertible elements in motion groups and the $ax+b$-group. We note that these examples cover those for which \cite{Nicmon} have constructed the spine. 

  We briefly describe the structure of this paper: in Section \ref{Not}, we establish notation and recall some of the necessary background. In Section \ref{aroy}, we observe a consequence (Corollary \ref{arcor}) of the Arens-Royden Theorem (\cite{Arens}, \cite{Roy}). In Section \ref{Rajchinv}, as an application of Corollary $\ref{arcor}$, we obtain that the abstract index group of the regular Rajchman algebra $B_0(G)$ is exactly the abstract index group of the Fourier algebra $A(G)$ (Remark \ref{reduc}). Here $B_0(G)$ is the set of all functions in $B(G)$ that vanish at infinity. We end this section with another observation, which gives the desired characterization of the invertible elements in the case of Euclidean motion groups, $p$-adic motion groups and the $ax+b$ group. In Section $\ref{main}$, we prove our main result (Theorem \ref{mainthm}). In Section \ref{decom}, we consider several classes of locally compact groups $G$ that include many totally minimal groups, connected semisimple groups with finite centre and extensions of compact analytic group by a nilpotent analytic group. We deduce from \cite{May1}, \cite{May2}, \cite{May3}, \cite{Cowl}, \cite{Liu} that their Fourier-Stieltjes algebras are symmetric, with their Gelfand spectrum being a semilattice of groups and satisfies the hypothesis of Theorem $\ref{mainthm}$. 

\section{Notation and Background}\label{Not}
Let $G$ be a locally compact group. The $Fourier$-$Stieltjes$ $algebra$ $B(G)$ is the linear span of the set $P(G)$ of all continuous positive definite functions on $G$ and can be identified with the Banach space dual of the full group $C^*$-algebra. For $u \in B(G)$ and $f \in L^1(G)$, the pairing is given by $\langle u,f\rangle = \int_G f(x)u(x)dx$. The space $B(G)$ is a involutive (semisimple) commutative Banach algebra under pointwise multiplication. Every $u \in B(G)$ is the coefficient function of some unitary representation of $G$, meaning there exist a unitary representation $\pi$ of $G$ and $\xi,\eta \in \mathcal{H}_\pi, $ the Hilbert space of $\pi$, such that $u(x) = \langle \pi(x)\xi,\eta\rangle$ for all $x \in G$ and $\|u\| = \|\xi\|.\|\eta\|$.

The $Fourier$ $algebra$ $A(G)$ is the closed ideal of $B(G)$ generated by all compactly supported functions in $B(G)$. Every $u \in A(G)$ is the coefficient function of the left regular representation $\lambda$ of $G$. The spectrum or Gelfand space $\Delta(A(G))$ of $A(G)$ can naturally be identified with $G$. More precisely, the map $x \to \phi_x$, where $\phi_x(u) = u(x)$ for $u\in A(G)$, is a homeomorphism between $G$ and $\Delta(A(G))$. 

When $G$ is abelian and $\widehat{G}$ the dual group of $G$, then the inverse Fourier and Fourier-Stieltjes transforms map $A(G)$ and $B(G)$ isometrically isomorphic onto the dual group algebra $L^1(\widehat{G})$ and the measure algebra $M(\widehat{G})$, respectively. 

Denote by $A_\pi$, the closure in $B(G)$ of the linear span of all matrix cofficients of the unitary representation $(\pi, \mathcal{H}_\pi)$. By Arsac's theory (\cite[3.1.II]{Ars}), $A_\pi$ characterizes $(\pi, \mathcal{H}_\pi)$ up to quasi-equivalence.

The $Rajchman$ $algebra$ $B_0(G)$ is defined as the intersection $$B_0(G) = B(G) \cap C_0(G).$$ We have the inclusion: $A(G) \subseteq B_0(G)$ and in the sequel, we always identify $\Delta(A(G))$ with $G$ and view $G$ to be embedded into $\Delta(B_0(G))$ and $\Delta(B(G))$ as evaluations. 

We let $W^*(G) = B(G)^*$, the von Neumann algebra generated by the $universal$ $unitary$ $representation$ ${\varpi}_G$ on $G$. By Arsac's theory (\cite[3.17]{Ars}), any closed translation-invariant subspace $A$ of $B(G)$ is of the form $$A = Z\cdot B(G)$$ where $Z \in W^*(G)$ is a central projection and $$ Z\cdot \inn{\pi(.)\xi,\eta} = \inn{\pi(.)Z\xi,\eta}.$$

 We end this section with certain facts about $\Delta$, the spectrum of $B(G)$. For $s \in \Delta$ there are naturally associated two endomorphisms of $B(G)$, $\gamma_s: b \in B(G) \to s.b \in B(G)$ and $\delta_s: b \in B(G) \to b.s \in B(G)$, where $\inn{s.b, x} = \inn{b,xs}$ and $\inn{b.s,x} = \inn{b,sx}$ for all $x \in W^*(G).$ Let $\Delta_+$ denote the positive, Hermitian elements and $\Delta_p$ the self-adjoint idempotents in $\Delta$. If $G$ is not compact, then $\Delta$ contains partial isometries and projections different from $e$, the identity of $G$. In what follows, let $\Sigma(G)$ denote the class of all continuous unitary representations of $G$ modulo unitary equivalence. We remark that any continuous unitary representation $(\pi,\mc H _\pi)$ of $G$ extends uniquely to a normal (i.e. $\sigma(W^*(G),B(G))$ continuous) representation of $W^*(G)$.  
\begin{theorem}\label{Walty}
Let $G$ be a locally compact group. Then
\begin{enumerate}
    \item \emph{(\cite[Theorem 2]{Wal2})}$z_F = \sup\{z[\pi]: z[\pi] \text{ is the support in } W^*(G)$ of finite dimensional (unitary) representation $\pi \}$. Then $z_F$ is a central projection in $W^*(G)$, and $z_F \in \Delta_+$. Moreover if $s \in \Delta_+$, we have $z_Fs = z_F$, i.e., $z_F \leq s.$ \label{ap}
    \item \emph{(\cite[Proposition 8]{Wal2})}\label{AP} $G_{z_F} = \{s \in \Delta : ss^* = s^*s = z_F\}$ is the almost periodic compactification of $G$, and $B(G_{z_F})$ is isometrically isomorphic to $B(G)\cap AP(G) = z_F.B(G)$, where $AP(G)$ denotes the $C^*$-algebra of almost periodic functions on $G$. 
\end{enumerate}
\end{theorem}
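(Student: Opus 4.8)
The plan is to anchor everything on the almost periodic compactification $bG$ of $G$, a compact group, and to show that $z_F$ is precisely the central projection in $W^*(G)$ implementing the canonical quotient $C^*(G)\twoheadrightarrow C^*(bG)$. First I would record that each finite-dimensional $\pi$ has a central support $z[\pi]$ in $W^*(G)$, namely the projection with $A_\pi=z[\pi]\cdot B(G)$ in the sense of Arsac, and that the central projections of a von Neumann algebra form a complete lattice; hence $z_F=\sup_\pi z[\pi]$ is again a central projection. By Arsac's description of translation-invariant subspaces, $z_F\cdot B(G)$ is the closed linear span $\overline{\sum_\pi A_\pi}$ over all finite-dimensional $\pi$.

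The key identification is $z_F\cdot B(G)=B(G)\cap AP(G)$. For the inclusion $\subseteq$, a coefficient $x\mapsto\inn{\pi(x)\xi,\eta}$ of a finite-dimensional unitary $\pi$ has $\overline{\pi(G)}$ compact in the unitary group, so all its translates are relatively compact and it is almost periodic. For $\supseteq$, an almost periodic $u\in B(G)$ extends to an element of $B(bG)$, and since $bG$ is compact, Peter--Weyl writes it in the closed span of finite-dimensional coefficients. Dually this says $z_F$ is the support of the quotient $*$-homomorphism $C^*(G)\to C^*(bG)$, and the induced inclusion $B(bG)\hookrightarrow B(G)$ is isometric onto $z_F\cdot B(G)$, being the pre-adjoint of a surjection of $C^*$-algebras. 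This already yields the isometric isomorphism $B(G_{z_F})\cong z_F\cdot B(G)=B(G)\cap AP(G)$ asserted in (2), once $G_{z_F}$ is identified with $bG$.

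Next I would verify $z_F\in\Delta_+$. Viewing $z_F$ in $W^*(G)=B(G)^*$, the quotient picture shows that $u\mapsto\inn{z_F,u}$ is the composition of the algebra homomorphism $u\mapsto z_F\cdot u$ from $B(G)$ onto $z_F\cdot B(G)\cong B(bG)$ with evaluation at the identity $e_{bG}$; being a homomorphism followed by a point evaluation it is multiplicative, and as a self-adjoint projection it is positive and Hermitian, so $z_F\in\Delta_p\subseteq\Delta_+$. For minimality, let $s\in\Delta_+$. Its restriction to $z_F\cdot B(G)\cong B(bG)=A(bG)$ is a character of the Fourier algebra of the compact group $bG$, hence evaluation at a unique $t\in bG=\Delta(A(bG))$. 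Since $s$ is Hermitian, $u^{*}(t)=\overline{u(t)}$ for all $u$, forcing $t=t^{-1}$; since $s$ is positive, $u(t)\ge 0$ for every continuous positive-definite $u$ on $bG$, and as such functions separate the points of the compact group $bG$ this forces $t=e_{bG}$. Thus $s$ and $z_F$ agree on $z_F\cdot B(G)$, which is exactly the relation $z_Fs=z_F$, i.e. $z_F\le s$.

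Finally, for the remaining assertion of (2), centrality of $z_F$ gives $z_F W^*(G)=z_F W^*(G)z_F\cong W^*(bG)$, so the elements $s\in\Delta$ with $ss^*=s^*s=z_F$ are exactly the characters lying in this corner and unitary in it; these are the images $\varpi_{bG}$ of the group elements of $bG$. Hence $G_{z_F}$ is the group $bG$, the almost periodic compactification, and together with the second paragraph we recover both statements. The main obstacle is the double role of $z_F$, simultaneously a central projection of $W^*(G)$ and a point of $\Delta$, and the need to control Walter's semigroup and order structure on $\Delta\subseteq W^*(G)$; concretely, the delicate point is the minimality $z_F\le s$, where one must know that restricting a positive Hermitian character to the compact corner lands among the \emph{point} evaluations of $A(bG)$ and that positivity then pins down the identity.
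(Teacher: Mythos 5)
The paper does not prove this theorem: both parts are quoted verbatim from Walter (\cite{Wal2}, Theorem 2 and Proposition 8) as background, so there is no in-paper argument to measure yours against, and your attempt has to be judged on its own. Much of it is sound: $z_F$ is a central projection because the central projections of a von Neumann algebra form a complete lattice; coefficients of finite-dimensional unitary representations are almost periodic, and since $\|\cdot\|_\infty\le\|\cdot\|_{B(G)}$ and $AP(G)$ is uniformly closed, the Arsac space $z_F\cdot B(G)=\overline{\sum_\pi A_\pi}$ lies in $B(G)\cap AP(G)$; and your minimality argument (restrict $s\in\Delta_+$ to $z_F\cdot B(G)\cong A(bG)$, note the restriction is nonzero because $s(1)=1$, obtain a point $t$ of the almost periodic compactification $bG$, and use positivity to force $\sigma(t)=1$ for every representation $\sigma$, hence $t=e$) is correct \emph{once} the identification $z_F\cdot B(G)\cong B(bG)$ is in hand. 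The genuine gap sits exactly there, in the inclusion $B(G)\cap AP(G)\subseteq z_F\cdot B(G)$: you assert that an almost periodic $u\in B(G)$ ``extends to an element of $B(bG)$.'' Almost periodicity only provides a continuous extension $\tilde u\in C(bG)$, and $B(bG)=A(bG)$ is a proper subset of $C(bG)$ whenever $bG$ is infinite; Peter--Weyl then gives only that $u$ is a \emph{uniform} limit of finite-dimensional coefficients, not a $B(G)$-norm limit, so membership in $z_F\cdot B(G)$ does not follow. This step is the entire nontrivial content of Walter's Proposition 8; already for abelian $G$ it is the classical theorem that an almost periodic Fourier--Stieltjes transform is the transform of a discrete measure, which requires a genuine argument (e.g.\ the invariant-mean computation $m(|\widehat\mu|^2)=\sum_x|\mu(\{x\})|^2$). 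Note that the easy fix of extending positive definite functions by density of $G$ in $bG$ handles $P(G)\cap AP(G)$ but not general elements of $B(G)\cap AP(G)$, since the positive definite parts of an almost periodic $u$ need not themselves be almost periodic.

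Two smaller unproved steps: your verification that $z_F$ is multiplicative uses that $u\mapsto z_F\cdot u$ is an algebra homomorphism, which amounts to $(1-z_F)\cdot B(G)$ being an ideal of $B(G)$, i.e.\ that $\pi\otimes\rho$ has no finite-dimensional subrepresentation when $\pi$ has none. This is true (an invariant vector in $\pi\otimes\rho\otimes\bar\sigma$ yields a Hilbert--Schmidt intertwiner and hence a finite-dimensional subrepresentation of $\pi$), but it is not automatic from your ``quotient picture'' and should be argued. Likewise, identifying $G_{z_F}$ with $bG$ as a \emph{topological} group requires matching the weak-$*$ topology on $G_{z_F}\subset W^*(G)$ with the topology of $bG$, which you do not address.
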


\section{A Consequence of the Arens-Royden Theorem}\label{aroy}
If $A$ is a commutative Banach algebra with identity, we denote by $A^{-1}$ the multiplicative group of all invertible elements of $A$. If $a = e^b$ for some $b \in A$, then $a \in A^{-1}$ with $a^{-1} = e^{-b}$. Hence the group $\exp{A}$ of all elements of $A$ with logarithms is a subgroup of $A^{-1}$. If $A^{-1}$ is given the norm topology, then it is a topological group with $\exp{A}$ as an open subgroup. Note that if $a = e^b \in \exp{A}$, then $t \to e^{tb}$ $(t \in [0,1])$ yields an arc in $\exp(A)$ connecting $a$ to $1$. In fact, $\exp(A)$ is exactly the connected component of the identity in $A^{-1}$, and analytic functional calculus shows that any element within distance $1$ of the identity is in this component. We denote the discrete group $A^{-1}/\exp(A)$ by $H^1(A)$.

Let $\Delta(A)$ be the Gelfand spectrum of $A$. We denote the group $$H^1(C(\Delta(A))) = \displaystyle \frac{C(\Delta(A))^{-1}}{\exp(C(\Delta(A)))}$$ by $H^1(\Delta(A))$. The Gelfand transform $a \to \widehat{a}: A \to C(\Delta(A))$ where $\widehat{a}(\gamma) = \inn{a,\gamma}$ for $a \in A$ and $\gamma \in \Delta(A)$, maps $A^{-1}$ into $C(\Delta(A))^{-1}$ and $\exp(A)$ into $\exp(C(\Delta(A)))$. In fact, for any unital Banach algebra homomorphism $\phi: A \to B$, we have $\phi A^{-1} \subset B^{-1}$ and $\phi \exp(A) \subset \exp(B)$; thus $\phi$ induces a map $\phi^*$ of $H^1(A)$ into $H^1(B)$. In particular, $a \to \widehat{a}$ induces a homomorphism of $H^1(A)$ into $H^1(\Delta(A))$.

\begin{theorem}[Arens and Royden, \cite{Arens}, \cite{Roy}]\label{ar}
If $A$ is a commutative Banach algebra with identity $1$ and spectrum $\Delta(A)$, then the map $H^1(A) \to H^1(\Delta(A)) = H^1(C(\Delta(A)))$ induced by the Gelfand transform, is an isomorphism. 
\end{theorem}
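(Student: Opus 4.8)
The plan is to reduce the statement to a question in several complex variables about polynomially convex compact sets, where it becomes a form of Oka's principle, and then to transport the resulting holomorphic data back into $A$ via the holomorphic functional calculus. Throughout write $M := \Delta(A)$, a compact Hausdorff space, and let $\theta \colon H^1(A) \to H^1(C(M))$ be the homomorphism induced by the Gelfand transform; the task is to show $\theta$ is bijective.

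First I would exploit the fact that the Gelfand topology realises $M$ as the inverse limit $M = \varprojlim_{\mathbf a}\sigma(\mathbf a)$ over the directed family of finite tuples $\mathbf a = (a_1,\dots,a_n)$ from $A$, where $\sigma(\mathbf a)\subset\mathbb C^n$ is the joint spectrum $\hat{\mathbf a}(M)$ and the bonding maps are coordinate projections. Since the index group $H^1(C(\,\cdot\,))\cong\check H^1(\,\cdot\,,\mathbb Z)$ is continuous for inverse limits of compact spaces, this gives $H^1(C(M))=\varinjlim_{\mathbf a} H^1(C(\sigma(\mathbf a)))$. Consequently every class in $H^1(C(M))$, and every relation trivialising such a class, is already visible on $C(\sigma(\mathbf b))$ for a sufficiently large tuple $\mathbf b$. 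The Arens--Calder\'on lemma then lets me enlarge any such $\mathbf b$ further so that $K := \sigma(\mathbf b)\subset\mathbb C^m$ is \emph{polynomially convex} while keeping the relevant coordinate data intact. This is the step that converts an abstract Banach-algebra problem into a concrete one on a well-behaved compact set in $\mathbb C^m$.

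Next I would use the holomorphic functional calculus as a bridge in the opposite direction: for a tuple $\mathbf b$ with $\sigma(\mathbf b)=K$, every function holomorphic on a neighbourhood of $K$ yields an element of $A$ through a unital algebra homomorphism $\mathcal O(K)\to A$ sending the $i$-th coordinate to $b_i$; zero-free functions go to invertibles, and $\widehat{\Phi(\mathbf b)}=\Phi\circ\hat{\mathbf b}$. With both bridges in place the two directions run as follows. For injectivity, if $a\in A^{-1}$ with $\hat a\in\exp(C(M))$, the direct-limit description produces a polynomially convex $K=\sigma(\mathbf b)$, with $a$ among the $b_i$, on which the corresponding coordinate function $z_a$ is zero-free and lies in $\exp(C(K))$; Oka's principle upgrades this to a \emph{holomorphic} logarithm $z_a=e^{H}$ near $K$, whence $b:=H(\mathbf b)$ satisfies $e^{b}=a$, so $a\in\exp(A)$. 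For surjectivity, a given $f\in C(M)^{-1}$ represents, at a finite polynomially convex stage $K$, a class $[g]$ with $g\in C(K)^{-1}$; Oka's principle supplies a zero-free holomorphic $\Phi$ near $K$ with $[\Phi|_K]=[g]$, and then $a:=\Phi(\mathbf b)\in A^{-1}$ satisfies $[\hat a]=\hat{\mathbf b}^{*}[\Phi|_K]=[f]$.

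The main obstacle is precisely the complex-analytic input invoked twice above: for a polynomially convex compact $K\subset\mathbb C^m$ the inclusion $\mathcal O(K)\hookrightarrow C(K)$ induces an isomorphism $\mathcal O(K)^{-1}/\exp\mathcal O(K)\xrightarrow{\ \sim\ }C(K)^{-1}/\exp C(K)$; that is, a continuous zero-free function on $K$ is homotopic through zero-free functions to a holomorphic one, and a holomorphic function with a continuous logarithm already has a holomorphic one. This Oka principle for $\mathbb C^{*}$-valued maps is the genuinely hard ingredient, and the way I would obtain it is by exhausting $K$ with Runge/pseudoconvex (analytic polyhedral) neighbourhoods and solving a $\bar\partial$-problem with estimates on them, which is where the real work sits. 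The remaining pieces --- the inverse-limit description of $M$, continuity of $\check H^{1}$, the Arens--Calder\'on lemma, and the functional calculus --- are standard and assemble routinely once the Oka principle is granted.
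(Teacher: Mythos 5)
The paper does not actually prove this statement: it is quoted verbatim as a classical theorem of Arens and Royden, with the citations \cite{Arens} and \cite{Roy} standing in for the proof, and the only argument the paper supplies in this section is for the derived Corollary \ref{arcor}. So there is nothing in the text to compare your argument against line by line; what I can say is that your outline reproduces the standard (essentially Royden's) proof architecture, and it is sound as a sketch. The three pillars --- realising $\Delta(A)$ as $\varprojlim_{\mathbf a}\sigma(\mathbf a)$ and using continuity of $\check H^1(\cdot,\mathbb Z)\cong C(\cdot)^{-1}/\exp C(\cdot)$ to push every class and every trivialisation down to a finite joint spectrum, the Arens--Calder\'on enlargement, and the Shilov--Arens--Calder\'on functional calculus combined with the Oka principle for $\mathbb C^{*}$-valued maps --- are exactly the ingredients of the classical proof, and your injectivity/surjectivity arguments assemble them correctly. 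Two caveats, neither fatal for a sketch: first, the Arens--Calder\'on lemma does not literally make $\sigma(\mathbf b)$ polynomially convex; it arranges that the \emph{polynomial hull} of the enlarged joint spectrum projects into a prescribed neighbourhood of the original one, and the standard proof is phrased on neighbourhoods of that hull (where Oka--Weil approximation and the functional calculus live), so your step ``enlarge so that $K$ is polynomially convex'' needs this small rewording to be literally correct. Second, the Oka principle you invoke twice is the entire analytic content of the theorem and is left as a black box; you identify it honestly as the hard part, but as written the proposal is an accurate reduction to that result rather than a self-contained proof. Given that the paper itself treats the whole theorem as a black box, your proposal goes strictly further than the paper does.
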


The following result may be well known, though it is not explicitly found in the literature, so we provide a proof. 

\begin{corollary}\label{arcor}
Let $A \subset B$ be a Banach sub-algebra such that $\Delta(A) = \Delta(B)$ and $1 \in A$. Then $B^{-1} = \exp(B) A^{-1}$. 
\end{corollary}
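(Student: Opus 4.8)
The plan is to deduce the statement from the Arens-Royden theorem applied to both $A$ and $B$ simultaneously, exploiting the hypothesis $\Delta(A)=\Delta(B)$. First I would observe that the inclusion $\iota\colon A\hookrightarrow B$ is a unital algebra homomorphism, hence by the discussion preceding Theorem \ref{ar} it induces a group homomorphism $\iota^*\colon H^1(A)\to H^1(B)$. The whole argument reduces to showing that $\iota^*$ is surjective: indeed, surjectivity says that every class in $H^1(B)=B^{-1}/\exp(B)$ is represented by (the image of) some $a\in A^{-1}$, which is exactly the assertion $B^{-1}=\exp(B)\,A^{-1}$.

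Next I would make precise the meaning of $\Delta(A)=\Delta(B)$. Because $1\in A$, every $\gamma\in\Delta(B)$ restricts to a nonzero character $\gamma|_A\in\Delta(A)$, giving a continuous restriction map $r\colon\Delta(B)\to\Delta(A)$; the hypothesis is that $r$ is a homeomorphism (a continuous bijection between the compact Hausdorff spectra of unital algebras). Such a homeomorphism induces an isometric isomorphism $r^{\#}\colon C(\Delta(A))\to C(\Delta(B))$, $f\mapsto f\circ r$, and therefore an isomorphism on first cohomology. The key compatibility to check is that the two Gelfand transforms fit together, namely $g_B\circ\iota=r^{\#}\circ g_A$, where $g_A$ and $g_B$ denote the Gelfand transforms of $A$ and $B$; this is a direct computation, since for $a\in A$ and $\gamma\in\Delta(B)$ one has $\widehat{\iota(a)}(\gamma)=\gamma(a)=(\gamma|_A)(a)=\widehat a(r(\gamma))$.

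With this in hand I would assemble the commuting square whose top row is $\iota^*\colon H^1(A)\to H^1(B)$, whose bottom row is the isomorphism $H^1(C(\Delta(A)))\to H^1(C(\Delta(B)))$ induced by $r^{\#}$, and whose two vertical arrows are the maps induced by the Gelfand transforms $g_A$ and $g_B$. By Theorem \ref{ar} both vertical arrows are isomorphisms, and the bottom arrow is an isomorphism since it comes from a homeomorphism; a diagram chase then forces $\iota^*$ to be an isomorphism, in particular surjective. Finally I would unwind surjectivity: for $b\in B^{-1}$ there is $a\in A^{-1}$ with $b\,\iota(a)^{-1}\in\exp(B)$, i.e.\ $b=e^{w}a$ for some $w\in B$, giving $B^{-1}\subseteq\exp(B)\,A^{-1}$; the reverse inclusion is immediate since $\exp(B)\subseteq B^{-1}$ and $A^{-1}\subseteq B^{-1}$.

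I expect the only genuinely delicate point to be the correct interpretation of the hypothesis $\Delta(A)=\Delta(B)$ and the verification that the restriction homeomorphism is compatible with the Gelfand transforms, so that the cohomology square actually commutes; everything after that is a formal consequence of Arens-Royden.
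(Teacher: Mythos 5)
Your proposal is correct and follows essentially the same route as the paper: both arguments reduce the statement to the surjectivity of the induced map $\iota^*\colon H^1(A)\to H^1(B)$, obtained by combining the Arens--Royden isomorphisms for $A$ and $B$ with the compatibility $\Gamma_B^*\circ\iota^*=\Gamma_A^*$ coming from the equality of spectra. Your extra care in interpreting $\Delta(A)=\Delta(B)$ via the restriction homeomorphism and checking that the Gelfand transforms intertwine is a reasonable tightening of what the paper leaves implicit, but it is the same argument.
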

\begin{proof}
Let $\Delta = \Delta(A) = \Delta(B)$. By Theorem $\ref{ar}$, if $\Gamma_j$ $(j = A,B)$ denotes the Gelfand transform of $j$, we see that $(\Gamma_B^*)^{-1} \circ \Gamma_A^*: H^1(A) \to H^1(B) $ is an isomorphism. Since the inclusion $i: A \to B$ is a unital Banach algebra homomorphism, it induces a map $i^*: H^1(A) \to H^1(B)$ such that $ \Gamma_B^* \circ i^* = \Gamma_A^*$. Now $i^*$ is injective because $\Gamma_B^* \circ i^*$ is injective. It is enough to show that $i^*$ is surjective. Suppose not. Then there exists $b\in B$ such that $b \exp(B) \notin i^*\left(H^1(A)\right)$ and $\Gamma_B^*(b \exp B) = \widehat{b}\exp(C(\Delta))$. Let $d \in A$ be such that $(\Gamma_A^*)^{-1}\left(\widehat{b}\exp(C(\Delta)\right) = d \exp A$. Since $\Gamma_B^*(d \exp B) =  \Gamma_B^* \circ i^*(d \exp A) = \Gamma_A^*(d \exp A)$, we have $\widehat{d} \exp(C(\Delta)) = \widehat{b} \exp (C(\Delta))$. But then the fact that $\Gamma_B^*$ is an isomorphism implies that $b\exp B = d \exp B \in i^*\left(H^1(A)\right)$, a contradiction.
\end{proof}

\section{Regularity of Rajchmann algebras and the invertibles }\label{Rajchinv} Recall that $A$ is called $regular$ if given any closed subset $E$ of $\Delta(A)$ and $\gamma \in \Delta(A) \backslash E$, there exists $a \in A$ such that $\widehat{a} = 0$ on $E$ and $\widehat{a}(\gamma) \neq 0$. Recall also that $A(G)$ is regular for any locally compact group $G$. The next theorem characterizes the regularity of $B_0(G)$ for general locally compact groups $G$. Note that in the case of abelian $G$, the algebra $B_0(G)$ is regular only if $G$ is compact. 
 
 \begin{theorem}(\cite[Theorem 2.1]{kanul})
 Let $A$ be any closed subalgebra of $B_0(G)$ containing $A(G)$. Then 
 \begin{enumerate}
     \item $G$ (i.e. $\Delta(A(G))$) is closed in $\Delta(A)$.
     \item $A$ is regular if and only if $\Delta(A) = G$. 
 \end{enumerate}
 In particular, $B_0(G)$ is regular if and only if $\Delta(B_0(G)) = G$
 \end{theorem}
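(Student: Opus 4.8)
The plan is to exploit that $A(G)$ is a (regular) closed ideal of $A$ and that, on the copy of $G$ sitting inside $\Delta(A)$, the Gelfand transform of any $a\in A$ is just evaluation. First I would record the basic dictionary: since $A(G)$ is a closed ideal of $B(G)$ and $A(G)\subseteq A\subseteq B(G)$, we have $A(G)\cdot A\subseteq A(G)$, so $A(G)$ is a closed ideal of $A$; and $G$ embeds in $\Delta(A)$ via $x\mapsto\phi_x$, $\phi_x(a)=a(x)$, which is the restriction to $G$ of the evaluation embedding into $\Delta(B(G))$. In particular, for $a\in A$ and $x\in G$ the Gelfand transform satisfies $\widehat a(\phi_x)=a(x)$, so $\widehat a$ vanishes on $G\subseteq\Delta(A)$ exactly when $a=0$ as a function on $G$, i.e. when $a=0$. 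This evaluation dictionary is what converts topological statements about $\Delta(A)$ into statements about functions on $G$.

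For part (1), I would show $G$ is closed in $\Delta(A)$ by a net argument using the one-point compactification $G\cup\{\infty\}$. Let $\chi\in\Delta(A)$ lie in the closure of $G$ and choose a net $x_\alpha\in G$ with $\phi_{x_\alpha}\to\chi$ in the weak$^*$ topology. Since $G\cup\{\infty\}$ is compact, after passing to a subnet we may assume $x_\alpha\to\xi\in G\cup\{\infty\}$. If $\xi=x\in G$, then continuity of each $a\in A\subseteq C_b(G)$ gives $\chi(a)=\lim_\alpha a(x_\alpha)=a(x)=\phi_x(a)$, so $\chi=\phi_x\in G$. If $\xi=\infty$, then since $A\subseteq B_0(G)\subseteq C_0(G)$ we get $\chi(a)=\lim_\alpha a(x_\alpha)=0$ for every $a\in A$, forcing $\chi=0$, which is impossible for a character. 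Hence $\chi\in G$ and $G$ is closed. I expect this to be the main obstacle: the whole point is that the $C_0$-decay of $A$ is exactly what excludes the ``escape to infinity'' case, and one must check that weak$^*$ convergence of the net of evaluations is compatible with convergence of $x_\alpha$ in $G\cup\{\infty\}$ (which it is, since both limits are limits of the same scalar net $a(x_\alpha)$).

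For part (2), for the reverse implication suppose $\Delta(A)=G$. Given a closed $E\subseteq G$ and $x\in G\setminus E$, regularity of $A(G)$ produces $u\in A(G)\subseteq A$ with $u|_E=0$ and $u(x)\neq 0$; since the Gelfand transform on $\Delta(A)=G$ is evaluation, $\widehat u|_E=0$ and $\widehat u(x)\neq 0$, so $A$ is regular. For the forward implication suppose $A$ is regular. By part (1), $G$ is closed in $\Delta(A)$, so if there were $\gamma\in\Delta(A)\setminus G$, regularity applied to the closed set $G$ and the point $\gamma$ would give $a\in A$ with $\widehat a|_G=0$ and $\widehat a(\gamma)\neq 0$. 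But by the dictionary $\widehat a|_G=0$ means $a=0$, contradicting $\widehat a(\gamma)\neq 0$; hence $\Delta(A)=G$. Finally, the ``in particular'' statement follows by applying both parts to $A=B_0(G)$, which is a closed subalgebra of itself containing $A(G)$; here closedness of $B_0(G)=B(G)\cap C_0(G)$ in $B(G)$ is immediate since $\|\cdot\|_\infty\leq\|\cdot\|_{B(G)}$ and uniform limits of $C_0$-functions are $C_0$.
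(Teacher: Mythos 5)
The paper states this result only as a citation of \cite[Theorem 2.1]{kanul} and gives no proof of its own, so there is nothing internal to compare against; your argument is the standard one for this theorem and is correct. The key points are all in place: the $C_0$-decay of $A$ rules out escape to infinity in the one-point-compactification net argument for closedness of $G$, regularity of $A(G)$ handles the direction $\Delta(A)=G\Rightarrow A$ regular, and semisimplicity of $A$ as an algebra of functions on $G$ (so that $\widehat a|_G=0$ forces $a=0$) handles the converse.
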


\begin{remark}\label{reduc}

 In Corollary \ref{arcor}, if $A = A(G)\oplus \m C 1$ and $B = B_0(G)\oplus \m C 1$ such that the algebra $B_0(G)$ is regular, we have that $(B_0(G) \oplus \m C 1)^{-1}/\exp(B_0(G)\oplus \m C 1)$ is isomorphic to $(A(G) \oplus \m C 1)^{-1}/\exp(A(G) \oplus \m C 1)$ and hence $(B_0(G)\oplus \m C 1)^{-1} =\exp(B_0(G)+ \m C 1) (A(G) \oplus \m C 1)^{-1} $ for regular Rajchmann algebras $B_0(G)$. 
 \end{remark}
  The condition $\Delta(B_0(G)) = G$ is closely related with asymptotic properties of (strongly continuous) unitary representations $(\pi, \mathcal H_\pi)$ of a locally compact group $G$ and with the property of $square$-$integrability$. By the latter we mean that there is a dense subspace $D$ of $\mathcal{H}_\pi$, such that for all $\xi,\eta \in D$, the matrix coefficient $\phi_{\xi\eta}: G \to \m C, g \to \inn{\pi(g)\xi,\eta}$ is in $L^2(G)$. Square-integrable representations are $C_0$-representations, which means that all matrix coefficients of $\pi$ lie in $B_0(G)$, but not vice versa. It follows from the results of Rieffel \cite{rief}, Duflo-Moore \cite{duflo} and others that a representation $(\pi,\mathcal{H}_\pi)$ is square-integrable if and only if it is quasi-equivalent to a subrepresentation of the regular representation $(\lambda_G, L^2(G))$. We write $\pi\ \overset{q}{\leq} \ \lambda_G.$
  
  The following theorem proved in \cite{May3} shows that for certain real algebraic groups, every $C_0$-representation has a square integrable tensor power. In fact, recall that a connected real algebraic group $G$ has a unique largest unipotent radical $N$, and decomposes as $$ G = N \rtimes_\varphi H ,$$ where $H$ is a reductive Levi-complement of $N$ and $\varphi: H \to Aut(N)$ is a group homomorphism. The groups $N$ and $H$ are Zariski-closed, $N$ is simply connected with respect to the topology induced by $GL(n, \m C)$ and $H$ acts algebraically and reductively on the Lie algebra $n$ by the derived representation. The centralizer in $G$ of a subset $S \in G$ is denoted by $C_G(S)$. 
  
  \begin{theorem}(\cite[Theorem 1.1]{May3})\label{maythm}
  Let $G$ be a connected real algebraic group and keep the above notation. Suppose that \begin{enumerate}
      \item $C_G(H) \cap N =\{e\}$, i.e., $H$ acts non-trivially on $N$. 
      \item $C_G(\mathcal{Z}) \cap H$ is compact, i.e., $\{h \in H : \varphi(h)a=a,\, \forall\, a  \in \mathcal{Z}\}$ is compact, where $\mathcal{Z}$ is the center of $N$. 
  \end{enumerate}
  
  Then there exists a $k_0 \in \m N$ such that for all $k \geq k_0$ and for all representations $(\pi, \mathcal{H}_\pi)$ whose subrepresentations all have compact kernel 
  $$ \pi ^{\otimes k} \overset{q}{\leq} \lambda.$$
  \end{theorem}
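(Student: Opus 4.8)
The plan is to reduce the assertion $\pi^{\otimes k}\overset{q}{\le}\lambda$ to a uniform $L^p$-decay estimate on the matrix coefficients of $\pi$ and then to boost integrability by taking tensor powers. As recalled above, a representation is square-integrable — hence, by Rieffel and Duflo--Moore, quasi-equivalent to a subrepresentation of $\lambda$ — as soon as a dense family of its matrix coefficients lies in $L^2(G)$. The matrix coefficients of $\pi^{\otimes k}$ are precisely the $k$-fold products $\prod_{i=1}^k\phi_{\xi_i\eta_i}$ of matrix coefficients of $\pi$, and the vectors $\xi_1\otimes\cdots\otimes\xi_k$ span a dense subspace of $\mathcal{H}_\pi^{\otimes k}$. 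Thus it suffices to prove a statement uniform in $\pi$: there is a finite $p=p(G)$, depending only on $G$, such that every matrix coefficient $\phi_{\xi\eta}$, for $\xi,\eta$ in a suitable dense subspace, of a representation all of whose subrepresentations have compact kernel lies in $L^p(G)$. Granting this, generalized H\"older puts the $k$-fold products in $L^{p/k}(G)$; since matrix coefficients are automatically bounded and $L^{p/k}(G)\cap L^\infty(G)\subseteq L^2(G)$ once $p/k\le 2$, one may take $k_0=\lceil p/2\rceil$.

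To produce the exponent $p(G)$ I would work in semidirect-product coordinates $G=N\rtimes_\varphi H$, writing Haar measure as a modular-function-weighted product $dn\,dh$, and treat the two directions separately. The compact-kernel hypothesis is what restricts $\pi$ to the generic part of the dual: a central character $\chi\in\widehat{\mathcal Z}$ whose $H$-stabilizer $H_\chi$ were non-compact would, by choosing $\rho$ trivial on a non-compact subgroup of $H_\chi$, produce a Mackey-induced subrepresentation with non-compact kernel. Hence only those $\chi$ with $H_\chi$ compact occur, and hypothesis (2), that $C_G(\mathcal Z)\cap H$ is compact, is exactly what makes the generic stabilizers compact so that this locus is non-empty and of full measure. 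For such $\chi$ the matrix coefficients restricted to $\mathcal Z$ are Fourier transforms of measures carried by the orbit $H\!\cdot\!\chi$, which is then a submanifold of $\widehat{\mathcal Z}$ of dimension $\dim H$; hypothesis (1), $C_G(H)\cap N=\{e\}$, ensures the action has no $H$-fixed directions, so the orbit is genuinely curved. Fourier-decay-of-surface-measure (stationary phase) estimates then give polynomial decay in the $\mathcal Z$-variable, and Kirillov's orbit method upgrades this to decay along all of $N$, yielding membership in $L^{p_0}(N)$ for a finite $p_0$ independent of $\pi$.

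The remaining and most delicate step is to control the $H$-direction and splice the two estimates into a single $L^p(G)$ bound, realizing $\pi$ as a direct integral of induced representations $\mathrm{Ind}_{N\rtimes H_\chi}^{G}(\chi\otimes\rho)$ over the generic orbits and estimating a single induced coefficient by an oscillatory integral over $H/H_\chi$. The main obstacle is precisely the uniformity of the decay exponent. Because $H$ is reductive it may have exponential volume growth, so the polynomial decay coming from the orbit geometry must be reinforced by genuine decay of the coefficient as $h\to\infty$ in $H$ — a Howe--Moore phenomenon made available again by hypothesis (1), which forbids $H$-invariant directions. The hard part is to show that the resulting exponent does not deteriorate as $\chi$ approaches the boundary of the generic locus: one must bound the relevant Jacobian of the orbit map and the modular contribution uniformly over admissible $\chi$. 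It is exactly the combination of hypotheses (1) and (2), forcing a non-degenerate action and compact stabilizers throughout the admissible set of central characters, that I expect to rescue this uniform bound and thereby pin down $p(G)$ and $k_0$.
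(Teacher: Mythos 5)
First, note that the paper does not prove this statement; it is quoted verbatim from Mayer (\cite[Theorem 1.1]{May3}), so your proposal has to be measured against Mayer's argument rather than anything in this paper. Your outer skeleton is the right one and is essentially the one Mayer uses: establish a uniform integrability exponent $p=p(G)$ for coefficients of representations all of whose subrepresentations have compact kernel, apply generalized H\"older to the coefficients of $\pi^{\otimes k}$ (which are $k$-fold products of coefficients of $\pi$ on a dense set of vectors), use $L^{p/k}\cap L^\infty\subseteq L^2$ for $p/k\le 2$, and invoke Rieffel/Duflo--Moore to convert square-integrability into $\pi^{\otimes k}\overset{q}{\leq}\lambda$. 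That reduction is sound.

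The gap is in the middle, where you try to produce $p(G)$, and two of your specific claims are false. First, hypothesis (2) does \emph{not} make the generic stabilizers $H_\chi$ of characters $\chi\in\widehat{\mathcal Z}$ compact: it only says that the \emph{kernel} of the $H$-action on $\mathcal Z$ is compact. For $G=\m R^2\rtimes SL_2(\m R)$ both hypotheses hold, yet the stabilizer of every nonzero character of $\m R^2$ is a non-compact unipotent subgroup. Second, your mechanism for excluding non-compact stabilizers --- inducing a $\rho$ trivial on a non-compact subgroup of $H_\chi$ to get a subrepresentation with non-compact kernel --- does not work, because induction does not transport kernels this way (the principal series of $SL_2(\m R)$, induced from the trivial character of the Borel, has compact kernel). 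Consequently the whole second half of your argument, which funnels the decay through ``compact stabilizers on the generic locus,'' rests on a premise that fails in the basic examples the theorem is meant to cover. Moreover, the part you flag as hard is genuinely where the theorem lives: for non-Kazhdan reductive $H$ there is \emph{no} uniform decay exponent for coefficients of representations of $H$ with compact kernel (complementary series destroy it), so the uniformity cannot come from Howe--Moore decay in the $H$-direction as you suggest; in Mayer's proof it is extracted from the $N$-direction, via the geometry of the $H$-orbits in $\widehat{\mathcal Z}$ guaranteed by hypothesis (1) and the compactness in hypothesis (2). As it stands the proposal is a plausible outline with the decisive estimate unproved and supported by incorrect intermediate claims.
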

  
  Several examples of groups are known with the property that for every $C_0$-representation $(\pi, \mathcal{H}_\pi)$, a sufficiently large tensor power is square-integrable: this follows for semisimple Kazhdan groups from the results of Cowling \cite{cowl1} and Moore \cite{moore}, for generalized motion groups from the results of Liukkonen and Mislove \cite{Liu} and for several connected totally minimal groups from the results of Mayer \cite{May1,May2,May3}.  
  
  Theorem \ref{maythm} bears significance to a conjecture of  Fig\`a-Talamanca and Picardello. The question of the square-integrability of tensor products is related to the relationship between $B_0(G)$ and the radicalizer $A_r(G)$ of the Fourier algebra. That is $$ A_r(G) = \{u \in B(G) : \exists k \in \m N \text{ such that }u^k \in A(G) \}.$$
  
 Fig\`a-Talamanca and Picardello \cite{Figpic} showed that $A_r(G)$ is not norm dense in $B_0(G)$ if the center of $G$ is not compact or if $G$ is non-compact nilpotent group. In particular, $A_r(G)$ is not norm dense in $B_0(G)$ if $G$ is a non-compact $abelian$ group. Their conjecture reads as follows:
 
 \begin{conjecture}
 Let $G$ be an analytic group with compact center and without non-compact nilpotent direct factors. Then $A_r(G)$ is dense in $B_0(G)$ i.e., $\Delta(B_0(G)) = G$. 
 \end{conjecture}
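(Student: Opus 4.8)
The plan is to realize $B_0(G)$ as the coefficient space of a single $C_0$-representation via Arsac's theory, and then to push every such coefficient into $A_r(G)$ using Mayer's tensor-power theorem (Theorem~\ref{maythm}). Since $B_0(G)=B(G)\cap C_0(G)$ is a norm-closed, translation-invariant subspace of $B(G)$, Arsac's structure theorem (\cite{Ars}) supplies a central projection $Z_0\in W^*(G)$ and a representation $\rho$ with $B_0(G)=Z_0\cdot B(G)=A_\rho$. As every coefficient of $\rho$ lies in $B_0(G)\subseteq C_0(G)$, the representation $\rho$ is a $C_0$-representation, and its coefficients span a dense subspace of $B_0(G)$. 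It therefore suffices to show that each coefficient of $\rho$ belongs to $A_r(G)$.

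The first point to record is that a $C_0$-representation automatically meets the admissibility hypothesis of Theorem~\ref{maythm}. Indeed, if $\sigma$ is any subrepresentation of $\rho$ with kernel $K$ and $u(x)=\inn{\sigma(x)\xi,\eta}$ is a coefficient, then $u(k)=\inn{\xi,\eta}$ for every $k\in K$; since $u\in C_0(G)$, letting $k\to\infty$ in $K$ forces $\inn{\xi,\eta}=0$ for all $\xi,\eta$ unless $K$ is compact. Thus every subrepresentation of $\rho$ has compact kernel. Granting that $G$ satisfies conditions (1) and (2) of Theorem~\ref{maythm}, there is then a $k_0$ with $\rho^{\otimes k}\overset{q}{\leq}\lambda$ for all $k\geq k_0$. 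Because square-integrability is equivalent to quasi-containment in $\lambda$ and $A(G)=A_\lambda$, the coefficients of $\rho^{\otimes k}$ all lie in $A(G)$. For a coefficient $u(x)=\inn{\rho(x)\xi,\eta}$ one has $u(x)^k=\inn{\rho^{\otimes k}(x)\xi^{\otimes k},\eta^{\otimes k}}$, so $u^k\in A(G)$ and hence $u\in A_r(G)$. As $A_r(G)$ is a subalgebra of $B(G)$ (closed under sums and products, since $A(G)$ is an ideal), the dense coefficient span of $\rho$ is contained in $A_r(G)$, giving the desired density of $A_r(G)$ in $B_0(G)$.

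Density of $A_r(G)$ then yields the reformulation $\Delta(B_0(G))=G$: a character $\chi$ of $B_0(G)$ cannot annihilate the ideal $A(G)$, since otherwise $\chi(u)^k=\chi(u^k)=0$ for every $u\in A_r(G)$ would force $\chi$ to vanish on the dense subalgebra $A_r(G)$, hence on all of $B_0(G)$. Therefore $\chi$ restricts to a nonzero character of $A(G)$, i.e.\ to a point of $G=\Delta(A(G))$, which by \cite{kanul} is closed in $\Delta(B_0(G))$.

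The principal obstacle is precisely the hypothesis check that brings $G$ under Theorem~\ref{maythm}. That theorem is proved for connected real algebraic groups $G=N\rtimes H$, whereas the conjecture is stated for arbitrary analytic groups; one must therefore pass to a suitable algebraic hull and translate the Lie-theoretic hypotheses—compact center and absence of non-compact nilpotent direct factors—into the algebraic conditions $C_G(H)\cap N=\{e\}$ and compactness of $C_G(\mathcal Z)\cap H$. Controlling how the unipotent radical and reductive part of the hull interact with the original center and nilpotent factors, and ensuring that square-integrability transfers back to $G$, is the delicate step; it is exactly this structural reduction that is not available in full generality, which is why the statement remains a conjecture rather than a theorem.
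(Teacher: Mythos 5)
This statement is stated in the paper as a \emph{conjecture} (due to Fig\`a-Talamanca and Picardello); the paper offers no proof of it, and the surrounding discussion only records the special classes of groups for which the conclusion is known. Your proposal does not close that gap. The peripheral steps you give are sound: $B_0(G)$ is a closed translation-invariant subspace, so Arsac's theory realizes it as $A_\rho$ for a $C_0$-representation $\rho$; every nonzero subrepresentation of a $C_0$-representation has compact kernel; quasi-containment of $\rho^{\otimes k}$ in $\lambda$ puts $u^k=\inn{\rho^{\otimes k}(\cdot)\xi^{\otimes k},\eta^{\otimes k}}$ into $A(G)=A_\lambda$; $A_r(G)$ is a subalgebra; and density of $A_r(G)$ forces every character of $B_0(G)$ to restrict nontrivially to the ideal $A(G)$ and hence be an evaluation at a point of $G$. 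But the entire argument is conditional on ``granting that $G$ satisfies conditions (1) and (2) of Theorem~\ref{maythm},'' and that is precisely where the conjecture lives. Theorem~\ref{maythm} is proved only for connected \emph{real algebraic} groups $G=N\rtimes_\varphi H$ with $C_G(H)\cap N=\{e\}$ and $C_G(\mathcal Z)\cap H$ compact. A general analytic group with compact center and no non-compact nilpotent direct factors need not be linear, let alone algebraic, and even when an algebraic hull exists there is no known translation of ``compact center'' and ``no non-compact nilpotent direct factors'' into Mayer's two centralizer conditions, nor a mechanism for transferring square-integrability of tensor powers back from the hull to $G$. You name this obstacle yourself in your final paragraph, which is the honest thing to do, but it means the proposal is a strategy outline rather than a proof.

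What your argument does establish is exactly what the paper already asserts: for the groups covered by Mayer's Theorem~\ref{maythm} (and, by the same template, for the semisimple Kazhdan groups of Cowling and Moore and the generalized motion groups of Liukkonen--Mislove), every $C_0$-representation has a square-integrable tensor power, hence $A_r(G)$ is dense in $B_0(G)$ and $\Delta(B_0(G))=G$. If you wish to present something provable, restrict the statement to those classes; as written, the conjecture remains open and your proposal should not be presented as a proof of it.
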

 
 The question of when $C_0$-representations are square-integrable has been widely studied. One of the most important results is that the left regular representation splits into irreducibles if every $C_0$-representation is square-integrable, i.e., $B_0(G) = A(G)$ (\cite{Fig}, \cite{Bag3}). The converse, though not true in general \cite{Bag2}, holds for many semidirect product groups, including the affine group and $p$-adic motion groups. This follows from (\cite{Nicrun}, Proposition 2.1). In \cite{knud1}, Knudby proves the following result: 
 \begin{theorem}(\cite[Theorem 4]{knud1})
 Let $G$ be a second countable locally compact group. Then $B_0(G) = A(G)$ provided that $G$ satisfies the following two conditions: $G$ is of type $I$, and there is a non-compact closed subgroup $H$ of $G$ such that every irreducible representation of $G$ is either trivial on $H$ or is a subrepresentation of the left regular representation.
 \end{theorem}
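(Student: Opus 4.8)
Since the inclusion $A(G) \subseteq B_0(G)$ always holds, the content of the statement is the reverse inclusion $B_0(G) \subseteq A(G)$. The plan rests on one elementary observation together with the type~$I$ structure of $\widehat{G}$. The elementary observation is: \emph{if $v \in C_0(G)$ is invariant under right translation by the non-compact closed subgroup $H$, then $v \equiv 0$}. Indeed, such a $v$ is constant along each coset $xH$, which is a non-compact closed set; were $v(x) \neq 0$ for some $x$, the set $\{\,|v| \geq |v(x)|\,\}$ would contain the non-compact set $xH$ and so fail to be compact, contradicting $v \in C_0(G)$.

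Next I would localize $A(G)$ inside $B(G)$ by a central projection. By Arsac's structure theorem (\cite{Ars}, recalled in Section~\ref{Not}), $A(G) = A_\lambda = z_\lambda \cdot B(G)$, where $z_\lambda \in W^*(G)$ is the support of the left regular representation $\lambda$. Because $G$ is second countable and of type~$I$, the unitary dual $\widehat{G}$ is a standard Borel space and the universal representation decomposes as a direct integral $\varpi_G = \int_{\widehat{G}}^{\oplus}\pi\, d\nu(\pi)$; under this decomposition the central projections of $W^*(G)$ are in bijection with the Borel subsets of $\widehat{G}$ (modulo $\nu$-null sets), with $z_\lambda$ corresponding to the set of irreducibles quasi-contained in $\lambda$, i.e.\ the subrepresentations of $\lambda$. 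Put $z_H := 1 - z_\lambda$. By hypothesis every irreducible that is not a subrepresentation of $\lambda$ is trivial on $H$, so $z_H$ corresponds to a Borel set of representations each trivial on $H$; hence $z_H\cdot\varpi_G$ is itself trivial on $H$, and every function in $z_H\cdot B(G)$, being a coefficient of $z_H\cdot\varpi_G$, is right-$H$-invariant.

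With these two ingredients the theorem falls out. Given $u \in B_0(G)$, decompose $u = z_\lambda\cdot u + z_H\cdot u$. The first summand lies in $z_\lambda\cdot B(G) = A(G) \subseteq C_0(G)$, and $u \in C_0(G)$, so the second summand $z_H\cdot u = u - z_\lambda\cdot u$ also lies in $C_0(G)$. But $z_H\cdot u \in z_H\cdot B(G)$ is right-$H$-invariant, so the elementary observation forces $z_H\cdot u = 0$. Therefore $u = z_\lambda\cdot u \in A(G)$, as desired.

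I expect the genuine work to sit entirely in the second paragraph: translating the representation-theoretic hypothesis into the operator-algebraic statement that the complementary projection $z_H = 1 - z_\lambda$ is supported on representations trivial on $H$, and that a direct integral of representations trivial on $H$ remains trivial on $H$. This is exactly where second countability (for the standard Borel structure on $\widehat{G}$) and type~$I$ (for the direct-integral decomposition and the bijection between central projections and Borel subsets of $\widehat{G}$) are indispensable. As a consistency check one notes that the two classes of irreducibles are automatically disjoint: a nonzero representation trivial on the non-compact subgroup $H$ cannot be square-integrable, since its nonzero coefficients would then be $H$-invariant elements of $C_0(G)$, which vanish by the observation above. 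The remaining steps — the coset argument and the projection bookkeeping — are routine.
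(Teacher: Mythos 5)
The paper does not actually prove this statement: it is quoted verbatim from Knudby \cite{knud1} as background, so there is no in-paper argument to compare against. Judged on its own terms, your strategy is the right one and is essentially the argument of the cited source: split a $C_0$ coefficient into a piece supported on the subrepresentations of $\lambda$ (which lies in $A(G)$) and a piece supported on representations trivial on $H$ (which is right-$H$-invariant and hence killed by your coset observation). The elementary observation in your first paragraph and the endgame in your third are correct as written.

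The gap sits exactly where you suspect, and one assertion there is false as stated. For the universal enveloping von Neumann algebra $W^*(G)=C^*(G)^{**}$ of a separable type I group, the central projections are \emph{not} in bijection with Borel subsets of $\widehat{G}$ modulo the null sets of a single measure $\nu$; the centre of $W^*(G)$ is far larger than any $L^\infty(\widehat{G},\nu)$ (it records every measure class on $\widehat{G}$ simultaneously), so the sentence ``$z_H$ corresponds to a Borel set of representations each trivial on $H$'' does not yet have a meaning. What you actually need is the concrete claim that every separable (e.g.\ cyclic) representation $\pi$ disjoint from $\lambda$ is trivial on $H$, and this requires three sub-steps you have not supplied: (i) the set $S_2$ of irreducibles trivial on $H$ is Borel in $\widehat{G}$ --- indeed closed, being $\widehat{G/N}$ for $N$ the closed normal subgroup generated by $H$ --- so by hypothesis and your disjointness check its complement $S_1=\widehat{G}\setminus S_2$ consists exactly of the irreducible subrepresentations of $\lambda$; (ii) writing $\pi=\int_{\widehat{G}}^{\oplus}\pi_x\,d\mu(x)$ (this is where type I and second countability enter), the piece over $S_1$ has all its coefficients in $A(G)$, via a Bochner-integral argument using $\int\|\xi_x\|\,\|\eta_x\|\,d\mu<\infty$ and the norm-closedness of $A(G)$ in $B(G)$ --- i.e.\ a direct integral of subrepresentations of $\lambda$ is quasi-contained in $\lambda$; (iii) that piece is then simultaneously quasi-contained in and disjoint from $\lambda$, hence zero, so $\mu$ is concentrated on $S_2$ and $\pi$ is trivial on $H$. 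With (i)--(iii) in place your conclusion follows. A cleaner route, closer to Knudby's, avoids $1-z_\lambda$ and the universal von Neumann algebra altogether: realise $u\in B_0(G)$ as a coefficient of a single separable representation, decompose that representation over $S_1\sqcup S_2$, and run the same endgame.
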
 
 Knudby is able to prove that $B_0(P) = A(P)$ for the minimal parabolic subgroup of several simple Lie groups. In \cite{Knud2}, it is shown that there are uncountably many (non-isomorphic) second countable locally compact groups $G$ such that $B_0(G) = A(G)$ and $G$ has no nontrivial compact subgroups. Knudby also proves $B_0(G) = A(G)$ by weakening the above conditions: $G$ is type $I$ and there is a countable family $\mathrm{H}$ of non-compact closed subgroups $G$ such that each irreducible unitary representation of $G$ is either trivial on some $H \in \mathrm{H}$ or is a subrepresentation of the left regular representation of $G$. The advantage of this condition is that it is preserved under direct products. Khalil \cite{kha} showed that the $ax+b$ group, which is non-compact and solvable, satisfies $A(G) = B_0(G)$ and by the above mentioned result, if $G$ is direct product of the $ax+b$ group with itself, then $B_0(G) = A(G)$. 
 \begin{remark}\label{arcorr}
For a unital Banach algebra $A = I \oplus B$ where $I$ is an ideal and $B$ is an closed sub-algebra, we have $A^{-1} = (I\oplus \m C1)^{-1}B^{-1}$ because if $(u+v)(u'+v') = 1$, then $0=uu'+vu'+uv'$ $( \in I)$ and $vv' = 1$. So $u+v = (uv^{-1}+1)v$ with $(uv^{-1}+1)^{-1} = u'v+1 \in I\oplus \m C1$.
\end{remark}

The above remark allows us to immediately describe the structure of invertible elements in the following examples:

\subsection{Euclidean Motion groups} Let $G = \m R^d \rtimes SO(d), d \geq 2$, where $SO(d)$ acts on $\m R^d$ by rotation. 
\begin{theorem}(\cite[Theorem 4.1]{kanul})
\begin{enumerate}
    \item $B(G) = B_0(G) \oplus B(SO(d)) \circ q$, where $q : G \to SO(d)$ denotes the quotient homomorphism.
    \item $B_0(G) \neq A(G)$, but $B_0(G) = \{u \in B(G)\  | \ u^{4d-3} \in A(G)\}.$
    \item $\Delta(B_0(G)) = G$ (i.e. $B_0(G)$ is regular) and $\Delta(B(G)) = G \cup SO(d)$.
    \item $B_0(G)|_{\m R^d} \neq B_0(\m R^d)$.
\end{enumerate}
\end{theorem}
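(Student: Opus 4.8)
The plan is to combine the Mackey machine for the semidirect product $G=\m R^d\rtimes SO(d)$ with Arsac's central-projection description of translation-invariant subspaces of $B(G)$ (\cite{Ars}). Since $\widehat{\m R^d}\cong\m R^d$ and $SO(d)$ acts on it with orbits the spheres $\{|\xi|=r\}$ ($r>0$) together with the fixed point $\{0\}$, Mackey's theory produces exactly two families of irreducibles: those inflated from $SO(d)$ (orbit $\{0\}$), and for each $r>0$ the representation $\pi_r$ on $L^2(S^{d-1})$ given by
\[
\langle \pi_r(x,k)\xi,\eta\rangle=\int_{S^{d-1}}e^{ir\langle x,\omega\rangle}\,\xi(k^{-1}\omega)\,\overline{\eta(\omega)}\,d\omega .
\]
For part (1) I would take $z\in W^*(G)$ to be the central support of $\bigoplus_{r>0}\pi_r$; by Arsac's theory $B(G)=z\cdot B(G)\oplus(1-z)\cdot B(G)$, and $(1-z)\cdot B(G)$ is the coefficient space of the representations trivial on $\m R^d$, that is, $B(SO(d))\circ q$. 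To identify $z\cdot B(G)$ with $B_0(G)$ I would use the stationary-phase/Bessel estimate that the coefficient above is the Fourier transform of a measure carried by the curved sphere of radius $r$ and hence decays like $|x|^{-(d-1)/2}$; since $SO(d)$ is compact, $\|\cdot\|_\infty\le\|\cdot\|_{B(G)}$, and $C_0(G)$ is sup-norm closed, all of $z\cdot B(G)$ lies in $C_0(G)$, so $z\cdot B(G)\subseteq B_0(G)$. Conversely, for $u\in B_0(G)$ the component $(1-z)u$ lies in $B(SO(d))\circ q$ and in $C_0(G)$; as the $\m R^d$-cosets are non-compact, a function factoring through $q$ and vanishing at infinity must be $0$, so $(1-z)u=0$ and $z\cdot B(G)=B_0(G)$.

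For part (2), the slow decay already gives $B_0(G)\ne A(G)$: each $\pi_r$ is non-square-integrable (its coefficients decay only like $|x|^{-(d-1)/2}$, consistent with $\pi_r$ being a measure-zero point of the Plancherel decomposition of $\lambda$), so $\pi_r\not\overset{q}{\leq}\lambda$, and since $\pi_r$ is irreducible this forces $A_{\pi_r}\cap A(G)=\{0\}$; hence any nonzero coefficient of $\pi_r$ lies in $B_0(G)\setminus A(G)$. The inclusions $A(G)\subseteq\{u:u^{4d-3}\in A(G)\}\subseteq B_0(G)$ are clear, so the content is the reverse inclusion, which amounts to $\Pi^{\otimes(4d-3)}\overset{q}{\leq}\lambda$ for the non-compact part $\Pi=\int^\oplus\pi_r\,d\rho(r)$ of any $u\in B_0(G)$, and hence to $\pi_{r_1}\otimes\cdots\otimes\pi_{r_k}\overset{q}{\leq}\lambda$ once $k\geq 4d-3$. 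The group $G$ satisfies the hypotheses of Theorem \ref{maythm} ($SO(d)$ acts faithfully on $\m R^d$, with only the fixed point $0$), which yields such a $k$ \emph{qualitatively}. \textbf{The main obstacle is the sharp exponent.} Restricting to $\m R^d$ turns $\pi_{r_1}\otimes\cdots\otimes\pi_{r_k}$ into multiplication by the $k$-fold spherical convolution $\sigma_{r_1}*\cdots*\sigma_{r_k}$, and $\Pi^{\otimes k}\overset{q}{\leq}\lambda$ requires this convolution to be absolutely continuous with respect to the Plancherel measure $r^{d-1}\,dr\,d\omega$ with a controlled density. Mere absolute continuity needs only a couple of factors; the large count $4d-3$ comes from simultaneously smoothing the convolution and absorbing the degeneracy of $r^{d-1}\,dr$ as $r\to 0$, i.e. forcing the convolution density to vanish fast enough at the origin that the quotient by Plancherel measure remains integrable. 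Establishing this sharp vanishing rate is the delicate point.

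Part (3) I expect to follow formally. For $\Delta(B_0(G))=G$ I would use part (2) directly: a character $\phi$ of $B_0(G)$ cannot vanish on $A(G)$ (otherwise $\phi(u)^{4d-3}=\phi(u^{4d-3})=0$ for all $u$, so $\phi=0$), hence $\phi|_{A(G)}$ is evaluation at some $g\in G=\Delta(A(G))$; then $\phi(u)^{4d-3}=u(g)^{4d-3}$ together with multiplicativity forces $\phi(u)=u(g)$, giving $\Delta(B_0(G))=G$, and by \cite[Theorem 2.1]{kanul} this is exactly the regularity of $B_0(G)$. For $\Delta(B(G))$ I would read off part (1): since $B_0(G)=z\cdot B(G)$ is an ideal with complementary subalgebra $B(SO(d))\circ q$, a character of $B(G)$ either annihilates $B_0(G)$—and then factors through $B(G)/B_0(G)\cong B(SO(d))$ to give a point of $\Delta(B(SO(d)))=SO(d)$—or restricts to a character of the ideal $B_0(G)$, giving a point of $\Delta(B_0(G))=G$; the two alternatives are mutually exclusive, so $\Delta(B(G))=G\cup SO(d)$.

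Finally, for part (4) I would make the restriction to $\m R^d$ explicit. Mackey restriction gives $\pi_r|_{\m R^d}\cong\int^\oplus_{S^{d-1}}\chi_{r\omega}\,d\omega$ acting by multiplication on $L^2(S^{d-1})$, so every element of $B_0(G)|_{\m R^d}$ is the Fourier--Stieltjes transform of a measure on $\m R^d$ whose disintegration over the radial map $\xi\mapsto|\xi|$ is absolutely continuous with respect to surface measure on each centred sphere $\{|\xi|=r\}$. A Rajchman measure on $\m R^d$ whose radial disintegration is \emph{singular} on the centred spheres—for instance surface measure on a sphere not centred at the origin, whose transform lies in $B_0(\m R^d)$ because of nonvanishing curvature, yet which meets each centred sphere in a set of lower dimension—therefore cannot be a restriction. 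This exhibits $B_0(G)|_{\m R^d}\subsetneq B_0(\m R^d)$, and is routine once the Mackey restriction is in hand.
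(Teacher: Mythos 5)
This statement is not proved in the paper at all: it is quoted verbatim from \cite[Theorem 4.1]{kanul} and used as a black box, so there is no internal argument to compare yours against; I can only judge your proposal on its own terms.

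Your architecture is the natural one, and parts (3) and (4) are essentially complete \emph{granted} parts (1) and (2): the polynomial-identity trick upgrading $\phi(u)^{4d-3}=u(g)^{4d-3}$ to $\phi(u)=u(g)$ works (compare coefficients of $t^{4d-4}$ in $(\phi(u)+t)^{4d-3}=(u(g)+t)^{4d-3}$), and for (4) it is indeed true that an off-centre sphere meets every centred sphere in a set of surface-measure zero, so its surface measure is singular with respect to \emph{every} rotation-invariant measure while its transform lies in $B_0(\m R^d)$. But two gaps remain. The lesser one is in (1): your Mackey list is incomplete --- the stabilizer of a point of the orbit $rS^{d-1}$ is a copy of $SO(d-1)$, so the irreducibles nontrivial on $\m R^d$ are $\mathrm{Ind}(\chi_{re_1}\otimes\tau)$ for all $\tau\in\widehat{SO(d-1)}$, not only the representation on $L^2(S^{d-1})$; moreover $z\cdot B(G)$ is the \emph{weak-$*$} closed coefficient space, so it contains coefficients of arbitrary direct integrals $\int^\oplus\pi_{r,\tau}\,d\mu(r)$, for which the constant in the $|x|^{-(d-1)/2}$ bound degenerates as $r\to0$; you need a dominated-convergence (Rajchman-measure) argument on the restriction to $\m R^d$ plus uniformity over the compact $SO(d)$ factor, not just ``nice coefficients are dense in sup norm.''

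The serious gap is that the entire quantitative content of the theorem --- $u^{4d-3}\in A(G)$ for every $u\in B_0(G)$, i.e.\ square-integrability of the $(4d-3)$-fold tensor powers --- is reduced to an estimate on iterated spherical convolutions against the Plancherel measure and then explicitly left unproved (``establishing this sharp vanishing rate is the delicate point''). Since part (2) \emph{is} that claim, and your proof of $\Delta(B_0(G))=G$ in part (3) is deduced from it, the proposal as written establishes only part (1) modulo the repairs above, part (4), the inequality $B_0(G)\neq A(G)$, and the reduction of $\Delta(B(G))=G\cup SO(d)$ to the rest. The core estimate carrying the exponent $4d-3$ is missing, so this is not yet a proof of the theorem.
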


By Remark $\ref{arcorr}$ and $\ref{arcor}$, we have \begin{align*}
    B(G)^{-1} &= (B_0(G)\oplus \m C1)^{-1}B(SO(d))^{-1}\circ q\\
              &= \exp(B_0(G)\oplus \m C1) (A(G) \oplus \m C1)^{-1} (A(SO(d)))^{-1}\circ q
\end{align*} 
 In this case, $G_{z_F}$(Theorem \ref{Walty}, \ref{AP}), the almost periodic compactification of $G$, is $SO(d)$. 

\subsection{\texorpdfstring{$n^{\text{th}}$}{Lg} rigid \texorpdfstring{$p$}{Lg}-adic motion like groups} Let $G = A \rtimes K$ where \begin{enumerate}
    \item $K$ is a compact group acting on an abelian group $A$, with each of the groups separable, and
    \item the dual space $\widehat{G}$ is countable and decomposes as $\widehat{K}\circ q \sqcup \{\lambda_k\}_{k=1}^{\infty}$ where $\widehat{K}$ is the discrete dual space of $K$, $q:G \to K$ is the quotient map, and each $\lambda_k$ is a subrepresentation of the left regular representation. 
\end{enumerate} Then by (\cite[Proposition 2.1]{Nicrun}), we have $B(G) = A(K) \circ q \oplus^{l_1} A(G)$.  By Remark $\ref{arcorr}$, we have 
$$ B(G)^{-1} = (A(G) \oplus  \m C1)^{-1} (A(K)^{-1}\circ q).$$ 

Examples of such groups are the $n^{th}$ rigid $p$-adic motion group $G_{p,n}: = \m Q_p^n \rtimes GL(n,\m O_p)$, where $\m Q_p$ is the field of $p$-adic numbers, $\m O_p :=\{r \in \m Q_p : |r|_p \leq 1\}$ is the $p$-adic integers, which is a compact open subring of $\m Q_p$, and the compact group $GL(n, \m O_p)$, which is the multiplicative group of $n \times n$ matrices with entries in $\m O_p$ and determinant of valuation $1$ and, acts on the vector space $\m Q_p^n$ by matrix multiplication. For $n=1$, this is the $p$-adic motion group $\m Q_p \rtimes \m T_p$, where $\m T_p = \{r \in \m Q_p : |r|_p = 1\}$. The above decomposition for $B(G_{p,1})$ was proven independently in \cite{Wal3} and \cite{Mauc}. In this case, $G_{z_F}$ is $K$.

\subsection{The \texorpdfstring{$ax+b$}{Lg} group} Let $G = \{(a,b): a,b \in \m R, a > 0\}$ with multiplication $(a,b)(c,d) = (ac,ad+b)$. If $j: G \to \m R$ is the homomorphism given by $j(a,b) = \log a$, then $j$ is continuous with $\ker j = \{(1,b):b \in \m R\} \cong \m R$. In (\cite[Th\'eor\`eme 7]{kha}), it is shown that $$ B(G) = (B(\m R)\circ j) \oplus^{l_1}A(G).$$ By Remark $\ref{arcorr}$, $$ B(G)^{-1} = (A(G)\oplus \m C1)^{-1}(B(\m R)^{-1}\circ j).$$ The groups continuously isomorphic to $\m R$ are just $\m R$ and $\m R_d$ (reals with the discrete topology). Hence by (\cite[Theorem 3]{Tay1}), each $\mu \in M(\m R)^{-1}$ has the form $\mu = \nu_1 * \nu_2 * e^w$ for $\nu_1 \in (L^1(\m R) \oplus \m C1)^{-1}, \nu_2 \in L^1(\m R_d)$ and $w \in M(\m R)$. Via the Fourier-Stieltjes transform, this factorization of invertible elements in $B(\m R)$ has the following form: each $u \in B(\m R)^{-1}$ has the form $u = v_1v_2e^w$ for $v_1 \in (A(\m R) \oplus \m C1)^{-1},v_2 \in A(\m R^{ap})^{-1}$ and $w \in B(\m R)$, Here $\m R^{ap}$ is the almost periodic compactification of $\m R$, which is also the Pontraygin dual group of $\m R_d$. Hence for every $u \in B(G)^{-1}$, we have $u = (v_1\circ j)\cdot (v_2\circ j)\cdot v_3e^w$ for $v_1 \in (A(\m R) \oplus  \m C1)^{-1},v_2 \in A(\m R^{ap})^{-1}, v_3 \in (A(G) \oplus  \m C1)^{-1}$ and $w \in B(G)$. In this case, $G_{z_F}$ is $\m R^{ap}$.

\section{Main Theorem}\label{main}

The study of invertibles in Fourier-Stieltjes algebras of classical motion groups instigated the search for a reasonable structure theory of Fourier-Stieljes algebra with its spectrum being a semilattice of groups. Motivated by examples of such Fourier-Stieltjes algebra (Section \ref{decom}), we present our main theorem in this section. We note that the $z$'s appearing in the following theorem are, indeed, critical elements of $\Delta(B(G))_+$ in the sense of (\cite[pp. 275]{Wal2}) and hence the corresponding maximal group $G_z$ in $\Delta(B(G))$ at the critical element $z$ is a locally compact group (\cite[pp. 276]{Wal2}). 

In what follows, $z' \leq z$ means $z'z = z'$.  
The following theorem is an adaptation of (\cite[Proposition 4.5]{Tay1}).

\begin{theorem}\label{mainthm}
Let $G$ be locally compact group such that the Fourier-Stieltjes algebra $B(G)$ with the maximal ideal space $\Delta(B(G))$ admits the following decomposition:
\begin{enumerate}
    \item $B(G) = \bigoplus^{l_1} \left\{\overline{A_r(G_z)}\ \middle|\ z \in \Delta(B(G))_+\right\}$,
    \item each $\overline{A_r(G_z)}$ is a closed ideal in $\bigoplus^{l_1} \left\{\overline{A_r(G_{z'})} \ \middle| \ z' \leq z \right\}$.
    \item $\Delta(B(G)) =  \bigcup\left\{G_z \ \middle| \  z \in \Delta(B(G))_+\right\}$ where $G_z = \{s \in \Delta(B(G)): s^*s = ss^* = z\}$. 
\end{enumerate} 
 Then for any $u \in B(G)^{-1}$, we have that $$ u = v_1\cdot v_2\cdots v_n\cdot\exp(w)$$ with each $v_i \in (A(G_{z_i})\oplus \m C 1)^{-1}$ for some $z_i \in \Delta(B(G))_+$ and $w \in B(G)$. The elements $v_i$ are unique modulo $\exp(A(G_{z_i})+\m C 1)$.
\end{theorem}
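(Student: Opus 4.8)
The plan is to adapt Taylor's argument (\cite[Proposition 4.5]{Tay1}) to this non-commutative setting by exploiting the $\ell^1$-decomposition in hypothesis (1) together with the semilattice structure of $\Delta(B(G))_+$. First I would fix $u \in B(G)^{-1}$ and write $u^{-1} = w$. Because the Gelfand transform $\widehat{u}$ is a continuous nonvanishing function on the compact-or-locally-compact spectrum $\Delta(B(G))$, and because hypothesis (3) tells us $\Delta(B(G)) = \bigcup\{G_z : z \in \Delta(B(G))_+\}$ is covered by the groups sitting at the positive idempotents, the support of $u$ in the $\ell^1$-direct sum (1) involves only finitely many summands $\overline{A_r(G_{z_i})}$. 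The key point is that invertibility forces a $\emph{finite}$ set of relevant critical elements $z_1, \dots, z_n$: if $u$ had nonzero components in infinitely many summands, the $\ell^1$-norm condition, combined with the requirement that $\widehat{u}$ be bounded away from zero on each $G_{z_i}$, would be violated. So the first real step is to identify the finite collection $\{z_1, \dots, z_n\}$ of positive elements on which the relevant components of $u$ live.

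Next I would proceed by induction on the number of summands, peeling off one critical element at a time, ordered by the partial order $z' \leq z$ (meaning $z'z = z'$). I would choose a maximal element $z_n$ among the relevant $z_i$'s. Using hypothesis (2), the summand $\overline{A_r(G_{z_n})}$ sits inside the ideal structure $\bigoplus^{\ell_1}\{\overline{A_r(G_{z'})} : z' \leq z_n\}$, which itself is a Banach algebra with identity (after adjoining $\m C 1$). Restricting $u$ to this sub-object and looking at the component of $u$ at the ``top'' level $z_n$, I would apply Remark \ref{arcorr}: writing the relevant algebra as $I \oplus B$ with $I = \overline{A_r(G_{z_n})}$ an ideal, invertibility factors as a product of an element of $(I \oplus \m C 1)^{-1}$ and an element of $B^{-1}$. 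This isolates a factor $v_n \in (\overline{A_r(G_{z_n})} \oplus \m C 1)^{-1}$. Then, because $A(G_{z_n})$ is dense in $\overline{A_r(G_{z_n})}$ and these share the same spectrum $G_{z_n}$, Corollary \ref{arcor} upgrades this to $v_n \in \exp(\overline{A_r(G_{z_n})} \oplus \m C 1)\cdot(A(G_{z_n}) \oplus \m C 1)^{-1}$, absorbing the $\exp$-part into a global $\exp(w)$ factor.

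After extracting $v_n$, I would divide $u$ by $v_n$ (up to the $\exp$ correction) and observe that the quotient is invertible in the algebra built from the strictly smaller set $\{z_1, \dots, z_{n-1}\}$, allowing the induction to continue. Iterating yields $u = v_1 v_2 \cdots v_n \cdot \exp(w)$ with each $v_i \in (A(G_{z_i}) \oplus \m C 1)^{-1}$, as desired. For the uniqueness statement, I would argue that two such factorizations differ by elements of the respective $\exp(A(G_{z_i}) + \m C 1)$: the components at each critical level $z_i$ are determined by the Gelfand transform of $u$ restricted to $G_{z_i}$, and Corollary \ref{arcor} identifies the abstract index groups $H^1$, so the ambiguity in each $v_i$ is precisely the connected component of the identity, namely $\exp(A(G_{z_i}) + \m C 1)$.

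The main obstacle I anticipate is the \emph{finiteness} step and the bookkeeping of the ideal-versus-subalgebra decomposition at each inductive stage. In Taylor's commutative setting the order structure on $\Delta(M(\Gamma))$ and the idempotent critical points provide a clean scaffolding; here one must verify that hypothesis (2) genuinely makes each truncation $\bigoplus^{\ell_1}\{\overline{A_r(G_{z'})} : z' \leq z\}$ a unital Banach algebra in which $\overline{A_r(G_z)}$ is a complemented ideal with a subalgebra complement, so that Remark \ref{arcorr} applies cleanly. Ensuring that the partial order is well-enough behaved (e.g., that maximal relevant elements exist and that passing to a smaller downset keeps the decomposition axioms intact) is the delicate part, and is where the restrictive hypotheses of the theorem do the real work.
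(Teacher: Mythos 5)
Your overall architecture matches the paper's: reduce to finitely many critical elements, peel them off one at a time via the ideal/subalgebra splitting of Remark \ref{arcorr}, and finish with Remark \ref{reduc} (Corollary \ref{arcor}) to pass from $(\overline{A_r(G_{z_i})}\oplus\mathbb{C}1)^{-1}$ to $(A(G_{z_i})\oplus\mathbb{C}1)^{-1}$, absorbing exponentials into $\exp(w)$. However, your very first step --- the finiteness reduction --- is argued incorrectly, and this is a genuine gap. It is simply not true that an invertible element of an $\ell^1$-direct sum has nonzero components in only finitely many summands: take $u = 1 + \sum_i 2^{-i}a_i/(2\|a_i\|)$ with the $a_i$ in infinitely many distinct summands; this is within distance $1/2$ of the identity, hence invertible, with infinite support. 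Your justification also conflates the restriction of $\widehat{u}$ to $G_{z_i}$ (which sees contributions from many components, in particular the constant $1$) with the Gelfand transform of the $z_i$-component of $u$, so ``$\widehat{u}$ bounded away from zero on $G_{z_i}$'' tells you nothing about the size of that component. The correct reduction, and the one the paper uses, is an approximation argument: the finite sums of the $\overline{A_r(G_z)}$ are dense, so one picks $\psi$ in such a finite sum with $\|u-\psi\|\leq\|u^{-1}\|^{-1}$; then $\psi$ is invertible and $u\psi^{-1}$ lies within distance $1$ of the identity, hence in $\exp(B(G))$, so $u=\psi\exp(\rho)$ and one may replace $u$ by $\psi$.

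Two further points of divergence are worth noting. First, you peel from a \emph{maximal} relevant $z_n$, treating $\overline{A_r(G_{z_n})}$ as the ideal $I$ and the remaining summands as the subalgebra $B$ in Remark \ref{arcorr}; but the remaining index set $\{z_1,\dots,z_{n-1}\}$ need not be downward closed, so hypothesis (2) does not obviously make its span a subalgebra, and Remark \ref{arcorr} may not apply. The paper instead normalizes the almost periodic component to $1$ (replacing $u$ by $(z_F\cdot u)^{-1}u$, using Theorem \ref{Walty}), writes $u = 1+\psi_{z_1}+\cdots+\psi_{z_n}$ with $z_1$ \emph{minimal}, observes that $z_1\cdot u = 1+\psi_{z_1}$ is invertible in $\overline{A_r(G_{z_1})}\oplus\mathbb{C}1$ because $\gamma_{z_1}$ is an algebra endomorphism, and divides by $1+\psi_{z_1}$, using hypothesis (2) to see that the quotient has no $z_1$-component; this avoids needing a subalgebra complement. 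Second, the uniqueness sketch via the index groups $H^1$ is reasonable and is consistent with how the paper sets things up, though the paper does not spell it out. If you replace your finiteness step with the approximation argument and rework the induction to peel from a minimal element (or otherwise justify that your complement is a subalgebra), the proof goes through.
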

\begin{proof}
The union of the finite sums of these subalgebras, $\overline{A_r(G_z)}$ for $z \in \Delta_+$, is dense in $B(G)$. So given $u \in B(G)^{-1}$, find $\psi$ in one of those finite sums such that $\|u - \psi\| \leq \|u^{-1}\|^{-1}$. Then $\psi \in B(G)^{-1}$ and $u = \psi * \exp(\rho)$ for some $\rho \in B(G)$. Hence, we may assume $u \in A(G_{z_F}) + \overline{A_r(G_{z_1})} + \overline{A_r(G_{z_2})} + \cdots + \overline{A_r(G_{z_n})}$ for some $z_1,z_2,\ldots,z_n \in \Delta_+$.  We can also assume that the almost periodic component of $u$ is $1$, because if $u \in B(G)^{-1}$, then $z_F\cdot u \in A(G_{z_F})^{-1}$ and we can replace $u$ by $(z_F\cdot u)^{-1}u$.

Let $u = 1+ \psi_{z_1} +\psi_{z_2} +\cdots+ \psi_{z_n}$ , where each $\psi_{z_i} \in \overline{A_r(G_{z_i})}$ and $z_1$ can be chosen minimal among $z_i$. Now $z_1\cdot u = 1+\psi_{z_1}$ is invertible in $\overline{A_r(G_{z_1})}\oplus\mathbb{C}1$, and $(1+\psi_{z_1})^{-1}(1+ \psi_{z_1} +\psi_{z_2} +\cdots +\psi_{z_n}) = 1+\psi_{z_2}'+\psi_{z_3}' + \cdots + \psi_{z_n}'$ where $\psi_{z_j}' = (1+\psi_{z_1})^{-1}\psi_{z_j}$ for $j=2,\ldots,n$ and $\psi_{z_j}'\perp \overline{A_r(G_{z_1})}$. By means of induction, we obtain the following factorization: if $u \in B(G)^{-1},$ then $u = \psi_{1}\cdot \psi_{2}\cdots \psi_{n}\cdot\exp(\rho)$ with $\rho \in B(G)$ and each $\psi_{i} \in (\overline{A_r(G_{z_i})} \oplus \m C 1)^{-1}$ for some $z_i \in \Delta_+$.

By Remark \ref{reduc}, each $\psi_i$ is of the form $ v_i \cdot \exp(\rho_i)$, where $v_i \in (A(G_{z_i})\oplus\m C1)^{-1}$ and $\rho_i \in (\overline{A_r(G_{z_i})} \oplus \m C 1)$. Now let $w = \rho_1+\cdots+\rho_n+\rho$.
\end{proof}

\section{Examples} \label{decom}
In this section, we describe some classes of locally compact groups $G$ for which the spectrum of their Fourier-Stieltjes algebras is a semilattice of groups and satisfies the hypotheses of our main theorem $\ref{mainthm}$.
 
 \subsection{Compact extensions of Nilpotent groups} Suppose $G$ is a connected lie group which has a closed normal nilpotent subgroup $M$ such that $G/M$ is compact and let $\m T^p$ be the maximal torus in the center of the connected component $N = M_0$ of $M$. For each subgroup $H$ with $\m T^p \subset H \subset G$, let $\overline{H} = H/\m T^p$. Consider the following conditions:
 \begin{enumerate}
     \item $\overline{G} = \overline{N} \rtimes K$, where $K$ is a compact analytic group, and
     \item the action of $K$ on the Lie algebra $L(\overline{N})$ has no nonzero fixed points.
 \end{enumerate}

Let $X$ be the set of (necessarily closed) $G$-normal analytic subgroups of $N$ which contain $\m T^p$, together with the trivial subgroup $\{e\}$. We consider $\m T^p \notin X$.
 Liukkonen and Mislove (\cite[Section 3]{Liu} ) proved that for such groups $B(G)$ is symmetric if and only if the above conditions hold and their analysis is detailed enough to yield the following decomposition of $B(G)$ and description of its maximal ideal space $\Delta$ :
 
\begin{enumerate}
    \item $B(G) = \bigoplus^{l_1} \left\{\overline{A_r(G/V)} \middle| V \in X\right\}$
    \item $\overline{A_r(G/V_1)}\cdot\overline{A_r(G/V_2)} \subset \overline{A_r(G/V_1 \cap V_2)}$.
    \item each $\overline{A_r(G/V)}$ is a closed ideal in $\bigoplus^{l_1} \left\{\overline{A_r(G/W)} \middle| W \supseteqq V\right\}$.
\end{enumerate}

 \begin{theorem}(\cite[Theorem 3.1]{Liu})\label{liu}
 Suppose $G$ is an analytic group such that $G/\mathbb{T}^p = N/\mathbb{T}^p \rtimes K$, where $K$ is compact analytic, $N$ is nilpotent analytic, and $\m T ^p$ is the maximal torus in $Z(N)$. Suppose the action of $K$ on $L(N/\m T^p)$ has no nonzero fixed points. Then the maximal ideal space $\Delta$ of $B(G)$ satisfies $$ \Delta = \{G/V : V \in X\}.$$  Multiplication in $\Delta$ is given by $g_1V_1g_2V_2 = g_1g_2 V_1V_2$. Convergence may be described as follows: if $g_nV_n \to gV$ in $\Delta$, then eventually $V_n \subseteqq V$ and $g_nV \to gV$ in $G/V$. Conversely, given a sequence $\{g_nV_n\}\subset \Delta$, there is a unique smallest dimensional $V$ such that $V_n \subset V$ eventually and $\{g_nV_n\}$ is eventually bounded in $G/V$; if $g_nV \to gV$ in $G/V$, then $g_nV_n \to gV$ in $\Delta$.
 \end{theorem}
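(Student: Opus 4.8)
The plan is to read the entire spectrum off the $l_1$-grading of $B(G)$ over the semilattice $(X,\cap)$ recorded in items (1)--(3) above, together with the spectrum of each homogeneous summand. Write $B_V=\overline{A_r(G/V)}$ and let $q_V\colon G\to G/V$ be the quotient, so that $B_V$ consists of pullbacks of functions on $G/V$. The first observation I would make is that the partial sum $\bigoplus^{l_1}\{B_W\mid W\supseteq V\}$ is exactly $B(G/V)\circ q_V$ (the same construction applied to $G/V$, which is again a group of the type considered here), that inside it $B_V$ is the closed ideal of item (3), equal for these groups to the regular Rajchman algebra $B_0(G/V)$, and hence that $\Delta(B_V)=G/V$ by the regularity statement of Section \ref{Rajchinv}. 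Dually, each $V$ determines a central projection $z_V\in W^*(G)$ with $z_V\cdot B(G)=B(G/V)\circ q_V$; since a function factoring through both $G/V_1$ and $G/V_2$ factors through $G/(V_1V_2)$, one gets $z_{V_1}z_{V_2}=z_{V_1V_2}$, a fact I will reuse for the multiplication.

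For the set-level description, I would take a nonzero $\chi\in\Delta$, restrict it to each summand to obtain characters $\chi_W:=\chi|_{B_W}$ (possibly zero), and set $S=\{W\mid\chi_W\neq 0\}$. The grading in (2), $B_{W_1}B_{W_2}\subseteq B_{W_1\cap W_2}$, forces $S$ to be closed under $\cap$, while the ideal property in (3) propagates any nonzero $\chi_W$ upward: if $\chi_W$ is evaluation at $gW\in G/W$ then for $W'\supseteq W$ and $u\in B_{W'}$ the product $uu'$ with $u'\in B_W$, $\chi_W(u')=1$, lies in $B_W$, and multiplicativity gives $\chi_{W'}(u)=u(gW')$, evaluation at the image of $gW$ in $G/W'$. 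Thus $S$ is an up-set carrying coherent evaluation at a single $g$, and since the members of $X$ have bounded dimension, so that $X$ satisfies the descending chain condition under $\cap$, the intersection $\bigcap_{W\in S}W$ is attained: $S$ is the principal filter $\{W\supseteq V\}$ for a unique minimal $V$, and $\chi=\chi^{gV}$ for some $gV\in G/V$. Conversely each $gV$ is multiplicative, since expanding a product via (2) gives $\chi^{gV}(uu')=\sum_{W_1,W_2\supseteq V}u_{W_1}(gW_1)u'_{W_2}(gW_2)=\chi^{gV}(u)\chi^{gV}(u')$, using $W_1\cap W_2\supseteq V\Leftrightarrow W_1,W_2\supseteq V$. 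This yields the bijection $\Delta=\bigsqcup_{V\in X}G/V=\{G/V\mid V\in X\}$.

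The multiplication is then immediate: the semigroup product on $\Delta\subseteq W^*(G)$ recalled in Section \ref{Not} restricts to the group law on $G\hookrightarrow\Delta$ and sends $gV$ to $g\cdot z_V$, so using centrality of the $z_V$ and $z_{V_1}z_{V_2}=z_{V_1V_2}$ one computes $(g_1z_{V_1})(g_2z_{V_2})=g_1g_2\,z_{V_1V_2}$, which is precisely $g_1g_2V_1V_2$.

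The topology is where I expect the genuine work. The Gelfand topology is the weak-$*$ topology from $B(G)$, and because $B(G)=\bigoplus^{l_1}_W B_W$ the finitely supported tuples are dense, so $g_nV_n\to gV$ iff $u(g_nW)\to u(gW)$ for every $W$ and every $u\in B_W$, under the convention $\chi^{g_nV_n}(u)=u(g_nW)$ when $W\supseteq V_n$ and $=0$ otherwise. Testing against the ideal $B_V=B_0(G/V)$ forces $V_n\subseteq V$ eventually and $g_nV\to gV$ in $G/V$; testing against $B_W$ for $W\not\supseteq V$ forces the remaining mass to escape, which gives the stated converse that $V$ is the unique smallest-dimensional subgroup with $V_n\subseteq V$ eventually and $\{g_nV_n\}$ eventually bounded in $G/V$. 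The main obstacle is exactly this last point: converting the coordinatewise weak-$*$ condition into the clean dichotomy and proving existence and uniqueness of the minimal $V$ for an arbitrary sequence. This forces one to combine the $l_1$-control of tails with the local compactness of each $G/W$ and the descending chain condition on $X$, and it is the step that relies on the specific geometry of compact-by-nilpotent groups rather than on soft semilattice-grading formalism.
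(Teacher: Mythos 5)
First, a point of comparison: the paper does not prove this statement at all --- it is quoted, with attribution, from Liukkonen and Mislove \cite{Liu}, and the items (1)--(3) preceding it are likewise imported from their work. So your proposal is being measured against the original source rather than against an argument appearing in this paper. That said, your reconstruction of the set-level and algebraic parts of the conclusion from the $l_1$-grading is essentially sound: the restriction/filter argument identifying each nonzero character with evaluation at some $gV$ (item (2) for closure of the support set under intersection, item (3) for upward coherence of the evaluation points, and the descending chain condition coming from bounded dimension), the identification $\Delta(\overline{A_r(G/V)})=G/V$ (which follows already from $A(G/V)$ being an ideal in $\overline{A_r(G/V)}$ every element of which has a power in $A(G/V)$ --- you do not need the stronger, unproved claim that $\overline{A_r(G/V)}=B_0(G/V)$), and the computation of the semigroup product via the central projections $z_V$ are all correct modulo routine verification.

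The genuine gap is the one you flag yourself: the topological description, which is a substantive part of the theorem, is not proved. The assertions that there is a \emph{unique} smallest dimensional $V$ with $V_n\subset V$ eventually and $\{g_nV_n\}$ eventually bounded in $G/V$, and that $g_nV\to gV$ in $G/V$ then implies $g_nV_n\to gV$ in $\Delta$, do not follow from (1)--(3) together with soft semilattice formalism. The missing ingredient is a properness statement: for $V,W\in X$ the product $VW$ is closed and the natural map $G/(V\cap W)\to G/V\times G/W$ is proper, so that a sequence bounded in both $G/V$ and $G/W$ is bounded in $G/(V\cap W)$. This is exactly what yields uniqueness of the minimal $V$ (two incomparable minimal candidates would give boundedness modulo their intersection, contradicting minimality) and what forces, in the sufficiency direction, that $g_nW\to\infty$ in $G/W$ for every $W\not\supseteq V$, so that the coordinates of a test function in $B_W\subseteq B_0(G/W)$ tend to $0$. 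That properness is where the structure theory of these compact-by-nilpotent groups, and the hypothesis that $K$ acts on $L(N/\mathbb{T}^p)$ without nonzero fixed points, actually do the work; your proposal names this obstacle accurately but does not supply the argument, so the proof is incomplete precisely at the part of the statement describing convergence in $\Delta$.
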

 
 \subsection{Connected semisimple Lie groups with finite center} Let $G$ be a connected semisimple Lie group with finite centre. Then we may decompose $G = (G_0 \times G_1 \times \cdots G_n)/C $ where $G_0$ is compact analytic, all of $G_1,\ldots,G_n$ are non-compact and simple analytic and $C$ is a finite central group of the product. Now let $\mathcal{S}$ be the set of groups $G_F = \prod_{k \in F} G_k/C_F$ where $F$ is a (possibly empty) subset of $\{1,\ldots,n\}$ and $C_F = C \cap \prod_{k \in F} G_k$. 
 
 \begin{theorem}(\cite[pp. 90]{Cowl})\label{Cow}
 Let $G$ be a connected semisimple Lie group with finite centre and let $\mathcal{S}$ be as described above. Then the maximal ideal space $\Delta$ of $B(G)$ is $$ \Delta = \bigcup_{S \in \mathcal{S}} G/S,$$ $G$ is dense in $\Delta$ and $B(G)$ is symmetric. 
 \end{theorem}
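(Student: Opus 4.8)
The plan is to read off the structure of $\Delta$ from the product decomposition $G = (G_0 \times G_1 \times \cdots \times G_n)/C$ combined with the decay of matrix coefficients on the non-compact simple factors. Since $G$ is a connected semisimple Lie group with finite centre it is of type $I$ (Harish--Chandra), and because $C$ is finite and central, every irreducible unitary representation $\pi$ of $G$ is the descent of an outer tensor product $\pi_0 \otimes \pi_1 \otimes \cdots \otimes \pi_n$, where $\pi_k$ is irreducible on $G_k$. For each non-compact simple factor $G_k$ ($k \geq 1$), either $\pi_k$ is trivial, or $\pi_k$ has no nonzero invariant vector and hence, by the Howe--Moore theorem, all its matrix coefficients lie in $C_0(G_k)$. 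Attaching to $\pi$ the set $F(\pi) = \{k \geq 1 : \pi_k \text{ is trivial}\} \subseteq \{1,\ldots,n\}$, the coefficients of $\pi$ are constant along $G_{F(\pi)} = \prod_{k \in F(\pi)} G_k$, so $\pi$ factors through $G/G_{F(\pi)}$, and on this quotient its coefficients vanish at infinity modulo the compact factor $G_0$.

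First I would organise $B(G)$ according to this invariant. Collecting the supports of all representations with a fixed $F(\pi) = F$ produces a central projection in $W^*(G)$, hence a translation-invariant subspace, which I would identify with $\overline{A_r(G/G_F)}$ sitting inside $B(G/G_F)\circ q_F \subseteq B(G)$, where $q_F : G \to G/G_F$ is the quotient map. The radicalizer enters precisely here: the fact, established for these groups by the decay results of Howe--Moore and Cowling (the analogue of Theorem \ref{maythm}, $\pi^{\otimes k}\overset{q}{\leq}\lambda$), that a high enough tensor power of every $C_0$-representation is square-integrable, places these coefficients in $\overline{A_r}$. Mutual disjointness of the supporting representations for distinct $F$ gives an $\ell^1$-direct sum $B(G) = \bigoplus^{l_1}\{\overline{A_r(G/S)} : S \in \mathcal{S}\}$, and the tensor-product behaviour under pointwise multiplication yields the semilattice rule $\overline{A_r(G/S_1)} \cdot \overline{A_r(G/S_2)} \subseteq \overline{A_r(G/(S_1 S_2))}$, mirroring Theorem \ref{liu}.

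Next I would compute the spectrum one component at a time. Since $A(G/S) \subseteq \overline{A_r(G/S)} \subseteq B_0(G/S)$ and $A_r(G/S)$ is dense in the component, the regularity results recalled in Section \ref{Rajchinv} identify $\Delta(\overline{A_r(G/S)})$ with $G/S$; assembling over $\mathcal{S}$ and checking that the characters glue consistently across the semilattice gives $\Delta = \bigcup_{S \in \mathcal{S}} G/S$. For density of $G = G/G_{\emptyset}$ in $\Delta$, I would realise each boundary point $gS$ (with $S = G_F$) as a weak-$*$ limit of a net $g t_\alpha$ with $t_\alpha \to \infty$ inside the factors of $S$: by Howe--Moore every matrix coefficient of a representation acting non-trivially on some factor of $S$ decays to $0$ along such a net, so in the limit only the components that factor through $G/S$ survive, and these reproduce exactly the character attached to $gS$, whence $g t_\alpha \to gS$. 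Finally, symmetry of $B(G)$ is immediate once $\Delta = \bigcup_S G/S$ is known: every point of every $G/S$ furnishes a genuine evaluation character, which is automatically a $*$-homomorphism, so $\widehat{u^*}(s) = \overline{\widehat{u}(s)}$ for all $s \in \Delta$ and self-adjoint elements have real Gelfand transform.

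The \emph{main obstacle} is the content of the second and third paragraphs together: proving that the decomposition is a genuine $\ell^1$-direct sum and, above all, that $\Delta$ contains \emph{no} characters beyond the group points $\bigcup_S G/S$. This is the non-commutative analogue of excluding the Wiener--Pitt phenomenon, and it is exactly where one must invoke the full strength of the $C_0$-implies-square-integrable-tensor-power property to show that each $\overline{A_r(G/S)}$ is regular (so that its spectrum collapses onto $G/S$ rather than acquiring exotic characters). Keeping track of the finite central identifications $C_F$ throughout adds a layer of bookkeeping but introduces no essential new difficulty.
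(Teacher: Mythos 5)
The paper offers no proof of this statement: it is imported verbatim from Cowling \cite[pp.~90]{Cowl}, so your sketch can only be measured against the known route to the result. Your architecture does match that route in outline --- decompose irreducibles as outer tensor products over the (finite-centre quotient of the) factors $G_0\times G_1\times\cdots\times G_n$, use Howe--Moore to get $C_0$-decay on the factors where $\pi_k$ is nontrivial, attach the invariant $F(\pi)$, split $B(G)$ accordingly, compute the spectrum componentwise, and obtain density of $G$ and symmetry as corollaries. Your density argument (nets $gt_\alpha$ with all coordinates in $G_F$ escaping to infinity, killing every component not factoring through $G/G_F$) and your symmetry argument (all characters are evaluations, hence hermitian for pointwise conjugation) are both sound once the spectrum is identified.

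The genuine gap is exactly at what you call the main obstacle, and your proposed tool for it does not exist in the stated generality. You invoke ``the analogue of Theorem \ref{maythm}'' --- that every $C_0$-representation of a connected semisimple Lie group with finite centre has a square-integrable tensor power --- but this is \emph{false} for non-Kazhdan groups; note that Section \ref{Rajchinv} of the paper attributes this property to semisimple \emph{Kazhdan} groups only (via Cowling \cite{cowl1} and Moore). For $\mathrm{SL}(2,\mathbb{R})$ or $\mathrm{SO}(n,1)$ the complementary series $\pi_s$ approach the trivial representation: coefficients of $\pi_s$ decay like $e^{-(1-s)t}$, so $\pi_s^{\otimes k}$ is square-integrable only for $k>1/(1-s)$, with no uniform $k$; consequently a direct integral such as $\int_0^1 \pi_s\,ds$ is still a $C_0$-representation (dominated convergence) yet \emph{no} tensor power of it is quasi-contained in $\lambda$. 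So the blanket tensor-power appeal cannot place all $C_0$-coefficients in $\overline{A_r(G/G_F)}$, nor deliver the regularity that rules out exotic characters. What is actually needed, and what Cowling's proof supplies, is the finer uniform estimate: matrix coefficients of irreducibles staying in a region of the dual bounded away from the trivial representation lie in a common $L^p$; one then truncates an arbitrary $C_0$-representation near the trivial representation to approximate every element of $B_0(G/S)$ in norm by elements of $A_r(G/S)$. With that density established, your remaining steps do go through: a character vanishing on $A(G/S)$ vanishes on $A_r(G/S)$ (since $\chi(u)^k=\chi(u^k)$) and hence on its closure, giving $\Delta(\overline{A_r(G/S)})=G/S$ in the spirit of \cite[Theorem 2.1]{kanul}, and the gluing over the finite semilattice of components is routine. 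As written, though, the central step of your proof rests on a property that fails for the very groups ($\mathbb{R}$-rank one, non-Kazhdan) the theorem is required to cover.
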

 
\subsection{Totally minimal groups} We call a group $G$ $totally$ $minimal$ if for any closed normal subgroup $N$, the factor group $G/N$ admits no Hausdorff group topology which is strictly coarser than the quotient topology. 

\begin{theorem}(\cite[Theorem 2.5]{May2})\label{may}
For a connected locally compact group $G$, the following are equivalent:
\begin{enumerate}
    \item $G$ is totally minimal;
    \item there exists a compact normal subgroup $K \triangleleft G$ such that the factor group satisfies
    $$ G/K = N \rtimes H,$$ where $N$ is a simply connected nilpotent Lie group, $H$ is connected linear reductive and $H$ operates on $N$ without nontrivial fixed points.
\end{enumerate}
\end{theorem}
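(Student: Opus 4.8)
The plan is to prove both implications through a reduction to connected Lie groups, an obstruction analysis via the Levi--Mal'cev decomposition for the forward direction, and a minimality criterion for the converse. First I would pass from $G$ to a Lie group. By the solution of Hilbert's fifth problem (Gleason--Yamabe), a connected locally compact group $G$ admits arbitrarily small compact normal subgroups $K_0$ with $G/K_0$ a Lie group. The key observation is that total minimality is a three-space property for compact normal subgroups: it descends to quotients by definition, and conversely a compact-by-totally-minimal group is totally minimal, since compact groups are themselves minimal and so any strictly coarser Hausdorff group topology is rigid on the compact kernel and must descend, contradicting total minimality below. Absorbing $K_0$ into the compact normal subgroup $K$ of the statement, it therefore suffices to characterise totally minimal connected Lie groups.

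For the implication $(1)\Rightarrow(2)$, let $G$ be a connected totally minimal Lie group. The controlling obstruction is that a non-compact abelian Hausdorff quotient can never be minimal: by the Prodanov--Stoyanov theorem every minimal abelian locally compact group is compact. Hence total minimality forces every abelian Hausdorff quotient of $G$ to be compact. I would feed this into the Levi--Mal'cev decomposition $G = R \rtimes S$ with solvable radical $R$ and semisimple $S$, together with the nilradical $N \triangleleft R$. Since $R/N$ is abelian and contributes an abelian quotient of $G$, its vector part must be trivial modulo a suitable compact normal subgroup $K$; this identifies the radical of $G/K$ with $N$ and yields $G/K = N \rtimes H$ for a connected reductive complement $H$. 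Total minimality then forces the central torus of $H$ to be compact --- a split central $\m R_{>0}$ would produce a non-compact abelian quotient, exactly as in the $ax+b$ group, which for this reason is minimal but not totally minimal --- and forces the semisimple part of $H$ to have finite centre, whence $H$ is linear. Finally, an $H$-fixed nontrivial vector in the centre $Z(N)$ would give a central vector subgroup acted on trivially by all of $G$, hence a non-compact abelian quotient isomorphic to $\m R$; since $H$ is reductive, vanishing of invariants propagates to all of $N$, so the action of $H$ on $N$ has no nontrivial fixed points.

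For the converse $(2)\Rightarrow(1)$, assume $G/K = N \rtimes H$ with $N$ simply connected nilpotent, $H$ connected linear reductive with compact centre, acting on $N$ without nontrivial fixed points. By the three-space reduction it is enough to show that $N \rtimes H$ is totally minimal. The structural engine is that, $H$ being reductive, invariants and coinvariants of the $H$-module $L(N)$ coincide; thus the fixed-point-free hypothesis $L(N)^H = 0$ gives $L(N)_H = 0$, so that $N \subseteq \overline{[G,G]}$. This inclusion persists in every quotient, since the image of $\overline{[G,G]}$ lies in $\overline{[G/M,G/M]}$ for each closed normal $M$. Consequently the abelianisation of every quotient $G/M$ is a quotient of $H^{ab}$, which is compact. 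Combining this with the total minimality of the semisimple part of $H$ (a connected semisimple Lie group with finite centre is totally minimal, the finiteness of the centre being guaranteed by linearity), a minimality criterion for connected locally compact groups yields that every $G/M$ is minimal, i.e. $G$ is totally minimal.

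The main obstacle I anticipate lies in the converse, in upgrading compactness of all abelianisations to genuine minimality of all quotients. The delicate point is that the fixed-point-free reductive action must be shown to make $N$ essential in $G$, uniformly across quotients, so that no strictly coarser Hausdorff group topology can survive; this is where one needs the essentiality form of the minimality criterion rather than a mere count of abelian quotients, and it requires tracking how an arbitrary closed normal subgroup $M$ meets $N$ and projects to $H$. A secondary technical hurdle is the three-space lifting across the compact kernel in the genuinely non-Lie setting, where one must verify that a coarser Hausdorff topology is compact on $K$ before descending. Throughout, the reliance on complete reducibility of the $H$-action explains why the hypothesis that $H$ is reductive --- and linear, to force finite centres --- is indispensable, and not merely the fixed-point-free condition.
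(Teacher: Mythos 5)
The paper does not prove this statement at all: it is quoted verbatim from Mayer \cite[Theorem 2.5]{May2}, so there is no internal proof to compare yours against, and your attempt has to stand on its own. As a skeleton it is sensible (Gleason--Yamabe reduction, a Merson-lemma three-space argument over the compact kernel, Levi decomposition plus Prodanov--Stoyanov for the forward direction), but both implications contain genuine gaps, and the converse one is fatal as written. Your converse argument uses the fixed-point-free hypothesis only to get $L(N)=[\mathfrak h,L(N)]\subseteq[\mathfrak g,\mathfrak g]$, hence $N\subseteq\overline{[G,G]}$ and compact abelianisations of all quotients, and then appeals to ``a minimality criterion'' combining this with total minimality of the semisimple part of $H$. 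No such criterion can exist: take $G=H_3\rtimes SO(2)$, the Heisenberg group with $SO(2)$ rotating the non-central directions. There $N=H_3\subseteq\overline{[G,G]}$, every quotient has compact abelianisation, and the reductive part is compact, yet $G$ is not even minimal, because its centre contains a copy of $\m R$ and closed central subgroups of minimal groups are minimal (Dikranjan--Prodanov), hence compact by Prodanov--Stoyanov. Of course this $G$ violates hypothesis (2) --- $SO(2)$ fixes $Z(H_3)$ pointwise --- but it satisfies every consequence your argument actually extracts from (2), so your chain of deductions cannot reach total minimality. The fixed-point-free action must be used to show that every closed normal subgroup of every quotient is ``essential'' for the topology, i.e.\ that a coarser Hausdorff group topology cannot degrade the topology of $N$; this is the real content of Mayer's proof and you have correctly flagged, but not closed, this hole.

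The forward direction has a parallel error: you claim that an $H$-fixed nonzero vector in $Z(N)$ produces a non-compact abelian Hausdorff quotient isomorphic to $\m R$. The same example shows this is false --- a $G$-central vector subgroup can sit inside $\overline{[G,G]}$ and leave no trace in any abelian quotient. The correct route is again through central subgroups rather than quotients: a closed central $\m R$ contradicts minimality of $G$ itself. Similarly, ``$R/N$ is abelian and contributes an abelian quotient of $G$'' is not literally true ($R/N$ is a subquotient, not a quotient), and extracting the semidirect structure $N\rtimes H$, the simple connectedness of $N$, and the finiteness of the centre of the semisimple part from total minimality requires Iwasawa-type structure theory that a count of abelian quotients does not supply. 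Your three-space reduction over the compact kernel is essentially correct but should invoke the Merson lemma explicitly to conclude that agreement of the coarser topology on $KM/M$ and on the quotient by it forces agreement on $G/M$. In short: the architecture is plausible and several individual observations (Prodanov--Stoyanov, invariants equal coinvariants for reductive actions, the $ax+b$ group as the obstruction to non-compact central tori in $H$) are correct and relevant, but the proof is not complete in either direction.
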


Consider a topological commensurability relation for a closed normal subgroup : $S \sim S'$ if and only if $S/(S \cap S')$ and $S'/(S \cap S')$ are compact. We may consider certain minimal representative of each commensurability class:
$$ \underline{S} = \bigcap\{S' : S' \text{ is closed normal subgroup of } G \text{ with } S \sim S' \}.$$ Let $\mathcal{N}(G)$ denote the collection of these minimal non-commensurable representatives. 

Given $G,K,N$ and $H$ be as in the theorem above, let $$ \mathcal{N}_H(N) = \{S \subseteq N : S \text{ is a connected normal $H$-invariant subgroup}\}.$$ Now decompose $H = (H_0 \times H_1 \times \cdots \times H_n )/C$ where $H_0$ is compact linear group and all $H_1,\ldots,H_n$ are non-compact simple linear Lie groups, and $C$ is a finite central subgroup of the product. We have that $\mathcal{N}(H)$ consists of groups $H_F = \prod_{k \in F} H_k/C_F$ where $F$ is a (possibly empty) subset of $\{1,\ldots,n\}$ and $C_F = C \cap \prod_{k \in F} H_k$. Now as in Remark 2.2 of \cite{Nico}, we get the following structure of $B(G)$ which is a reformulation of  Mayer (\cite[Theorem 15]{May1}): 

$$ B(G) = \bigoplus^{l_1}_{\substack{S \in \mathcal{N}_H(N)\\ F \subseteq\{1,\ldots,n\}}} B_0(G/(S \rtimes H_F))$$

\begin{remark}
By Theorem $\ref{liu}$, if $G$ a is linear connected totally minimal with $H$ compact, then the maximal ideal space of $B(G)$ is a semilattice of groups.
\end{remark}

\begin{remark}
By Theorem $\ref{Cow}$ and (\cite[Corollary 2.12]{May3}), if $H$ is a connected linear reductive group, then the maximal ideal space $\Delta$ of $B(H)$ is 
$$ \Delta = \bigcup_{H_F \in \mathcal{N}(H)} H/H_F$$ and $B(H)$ is symmetric. 
\end{remark}

\begin{remark} If $G$ is a linear connected totally minimal group with $N$ abelian, so are its quotients. By Theorem $\ref{may}$, $G = V \rtimes H$ for a vector group $V$. By (\cite[Example 2.9]{May3}), $B_0(G) = \overline{A_r(G)}$ and hence $B_0(G/S) = \overline{A_r(G/S)}$ for any closed normal subgroup $S$ of $G$. Therefore in this case, we get that $B(G)$ is symmetric and its maximal ideal space
$$ \Delta = \{G/(S \rtimes H_F): S \in \mathcal{N}_H(V), F \subseteq\{1,\ldots,n\}\}.$$
\end{remark}

\subsection*{Acknowledgements} The author would like to thank his supervisor Nico Spronk for innumerable valuable discussions and for suggesting this problem. The author also wishes to thank both his advisors, Nico Spronk and Brian Forrest for their constant encouragement and advice. We thank the referees for their valuable organizational suggestions. 

\bibliography{main}
\bibliographystyle{amsalpha}

\baselineskip=17pt


\end{document}